\documentclass[reqno,final]{amsart}
\usepackage{caption}
\usepackage{subcaption}

\usepackage{orcidlink}
\usepackage{mathabx}
\usepackage[utf8]{inputenc} 
\usepackage[english]{babel}
\usepackage{amstext}
\usepackage{euscript}
\usepackage{bbm}
\usepackage{mathrsfs}
\usepackage{enumerate}
\usepackage{graphicx}
\usepackage{caption}
\usepackage{amscd}
\usepackage{verbatim}
\usepackage{amssymb}
\usepackage{amsthm}
\usepackage{amsmath}
\usepackage{amstext}
\usepackage{amsfonts}
\usepackage{latexsym}
\usepackage{mathtools} 
\usepackage{enumitem} 
\usepackage{accents} 
\usepackage[foot]{amsaddr} 
\usepackage{amssymb,amsthm,tikz,fancyhdr,cancel,enumerate,array,booktabs,setspace,pgf,tikz,fancyhdr,graphicx,color}
\usepackage[T1]{fontenc}
\usetikzlibrary{calc,angles,quotes,patterns,arrows,decorations.pathreplacing}


\definecolor{dkblue}{RGB}{1,31,91} 

\usepackage{todonotes}

\usepackage[color]{showkeys}

\numberwithin{equation}{section}
\newtheorem{theorem}{Theorem}[section]
\newtheorem{lemma}[theorem]{Lemma}
\newtheorem{corollary}[theorem]{Corollary}
\newtheorem{proposition}[theorem]{Proposition}

\newtheorem{remark}[theorem]{Remark}






\newcommand{\rd}{\mathrm{d}}




\newcommand{\bR}{\mathbb{R}}

\newcommand{\Sd}{\mathbb{S}}

\newcommand{\vs}{v_{*}}

\newcommand{\am}{a_{-}}
\newcommand{\ap}{a_{+}}
\newcommand{\xie}{\xi_{e}}
\newcommand{\p}{\varphi}
\newcommand{\K}{\mathcal{K}}

\newcommand*{\im}{\mathop{}\!\mathrm{i}}
\newcommand*{\e}{\mathop{}\!\mathrm{e}}

\newcommand{\eqdef }{\overset{\mbox{\tiny{def}}}{=}}

\begin{document}
	
	\keywords{Boltzmann Equation, Measure-Valued Solutions, Fourier Transform, Non-Cutoff Cross-Section, Inelastic Collision, Hard-Potential.}
	\subjclass[2020]{Primary 35Q20, 76P05, 35R11, 26A33; Secondary  35H20, 82B40, 82C40.  }

	\title[Inelastic Boltzmann Equation without angular cutoff]{Moments Creation for the inelastic Boltzmann equation for hard potentials without angular cutoff}
	
	
	
	
	\author[J. W. Jang]{Jin Woo Jang $^{\dagger}$ \orcidlink{0000-0002-3846-1983} }
	\address{$^{\dagger}$Department of Mathematics, Pohang University of Science and Technology (POSTECH), Pohang, South Korea. \href{mailto:jangjw@postech.ac.kr}{jangjw@postech.ac.kr} }

	\author[K. Qi]{Kunlun Qi $^{\ddagger}$ \orcidlink{0000-0003-2995-8901}}
	\address{$^{\ddagger}$School of Mathematics, University of Minnesota--Twin Cities, Minneapolis, MN 55455 USA. \href{mailto:kqi@umn.edu}{kqi@umn.edu} }
	
	
	\maketitle

	\begin{abstract}This paper is concerned with the inelastic Boltzmann equation without angular cutoff. We work in the spatially homogeneous case.  We establish the global-in-time existence of measure-valued solutions under the generic hard potential long-range interaction on the collision kernel.
	In addition, we provide a rigorous proof for the creation of polynomial moments of the measure-valued solutions, which is a special property that can only be expected from hard potential collisional cross-sections. The proofs rely crucially on the establishment of a refined Povzner-type inequality for the inelastic Boltzmann equation without angular cutoff. The class of initial data that we require is general in the sense that we only require the boundedness of $(2+\kappa)$-moment for $\kappa>0$ and do not assume any entropy bound.
	\end{abstract}

	\tableofcontents
	
	\section{Introduction}
	\label{sec:intro}
	
	\subsection{Problem statement and motivations}
	\label{sub:motivation}
	This paper is concerned with the inelastic Boltzmann equation which describes the dynamics of particles that undergo the inelastic collision process such as granular media. Though studies on the Boltzmann equation have a long history since its first appearance, the global well-posedness of the equation for inelastic collisional processes for general initial-boundary values has only been answered partially and is still highly open. In this paper, we are interested in working on the inelastic regime without any angular cutoff assumption. Especially, we are interested in the hard potential case whose range can cover the standard hard-sphere case (with an angular singularity though the angular kernel for the actual hard-sphere model has a cutoff). The motivation behind the collision cross-section with an angular singularity goes back to the very early prototype of Maxwell even before Boltzmann's formulation of the equation. Regarding the hard-potential equation without angular cutoff, we would first like to establish the global existence of measure-valued solutions in the spatially homogeneous case under a general class of initial conditions. The class of initial conditions requires a slightly higher moments bound than the energy bound, and no entropy bound is needed.   In addition, we would also like to prove the moments-creation property under the same class of initial conditions. We note that this moments-creation property is a special property that only the hard potential regime can retain. These problems have been in general considered to be very difficult due to the lack of a sufficient Povzner-type estimate for the non-cutoff inelastic Boltzmann equation, the historic development and significance of which will be presented in detail later on. Hence, we put the establishment of a Povzner-type estimate for the inelastic Boltzmann without cutoff assumption as the main goal of this paper. Then after we obtain the inelastic Povzner inequality for the non-cutoff collision kernel, we will provide the proof for the global well-posedness and the moments-creation property for the inelastic Boltzmann equation without angular cutoff.

    The establishment of the existence theory of the measure-valued solution in the hard-potential case completes our journey for the global existence of measure-valued solutions complementing our preceding work~\cite{Qi21_soft, Qi22}, in which the inelastic Boltzmann equation associated with Maxwellian molecule and soft-potential collision kernels have been studied.  Furthermore, another significant novelty of this paper is on the proof of the creation of polynomial moments of the solution obtained for the inelastic Boltzmann equation under a certain class of initial conditions. In fact, as a distinctive property of the solution for the hard-potential collision kernel (for instance, hard-sphere), the moments-creation property of the elastic Boltzmann equation has been figured out for a long period of time. We would like to introduce that Desvillettes in~\cite{Desvillettes93_moments} showed that if some moments of the initial data of more than two order are bounded, then all moments of the solution are bounded for any positive time, which essentially extended the results of Elmroth in~\cite{Elmroth83} that all moments remain bounded uniformly in time if they are initially bounded. Moreover, the result of Desvillettes has further been proved to be true by Wennberg in \cite{Wennberg94,Wennberg97} only if the energy of the initial data is bounded. Later, Morimoto-Wang-Yang showed that the same result also holds for the measure-valued solution in~\cite{morimoto2016measure} where they use the Povzner inequality by Mischler-Wennberg in~\cite{MW1999}. Meanwhile, thanks to the moments-estimates mentioned above, more intricate studies about of tail behavior of the solution have been derived, for which we refer to the seminal work~\cite{Bobylev97_moment,GPV09} by Bobylev and Gamba-Panferov-Villani as well as recent progresses~\cite{ACGM2013,Fournier2022} by Alonso-Ca\~{n}izo-Gamba-Mouhot and Fournier for an exponential moment-estimate, \cite{PT2018stretched} by Pavic-Colic-Taskovic for the propagation of stretched exponential moments, \cite{TAGP2018} by Taskovic-Alonso-Gamba-Pavlovic for the Mittag-Leffler moments.

    On the other hand, compared to the elastic case, such a moments-estimate for the inelastic Boltzmann equation is still very rare, especially under the non-cutoff assumption. In~\cite{BGP04}, Bobylev-Gamba-Panferov first established the moments-inequalities of the inelastic Boltzmann equation with cutoff hard-potential collision kernel, where a sharper version of the Povzner inequality for the inelastic collision was proved generalizing the previous elastic counterpart ~\cite{Bobylev97_moment} by Bobylev. Mischler-Mouhot-Rodriguez studied the cooling process for inelastic Boltzmann equations for hard-spheres, including the well-posedness, self-similar solution, and tail behavior in their series of papers \cite{MM2006hardsphere1,MM2006hardsphere2}.
    Besides, the pseudo-Maxwellian model, regarded as an approximated equation via the replacement of the collision kernel by a certain mean-value independent of the relative velocity, was studied in~\cite{BCG00} by Bobylev-Carrillo-Gamba; in addition, for the self-similar scaling problem, the solution with ``flat'' tail, decaying like an inverse power law, was first conjectured by Ernst-Brito in~\cite{EB02_conjecture} and was rigorously justified by Bobylev-Cercignani-Toscani in~\cite{BCT2003}. We also mention the moments equation for the inelastic model with a heat bath by Carrillo-Cercignani-Gamba in~\cite{CCG00}, Bobylev-Cercignani in~\cite{BC02_thermal} and Gamba-Panferov-Villani in~\cite{GambaDiffusively}, Alonso-Lods in \cite{AL2013} for references. For more results about the inelastic Boltzmann equation, we refer to the existence of solution near vacuum data \cite{Alonso2009}, the model with variable restitution coefficient \cite{AL2010} thanks to Alonso and his collaborator as well as the elastic limit problem \cite{MM2009inelasticlimit, MM2009inelasticlimit2} by Mischler-Mouhot. In particular, the rigorous derivation of the system of hydrodynamic equations from the inelastic Boltzmann equation has been recently presented in \cite{2008.05173} and \cite{ALT2022JSP}.

	\subsection{The inelastic Boltzmann equation}
	
	Unlike the classical elastic collision between particles, the loss of energy occurs in the impact direction during the inelastic collision process. If we let $ e $ be the restitution coefficient, the post-collisional velocities $ v', v'_{*} $ can be represented as
	\begin{equation}\label{ve}
		\left\{
		\begin{array}{lr}
			v' =  \frac{v+\vs}{2} + \frac{1-e}{4}(v-\vs) + \frac{1+e}{4}|v-\vs|\sigma, &\\
			v'_{*} = \frac{v+\vs}{2} - \frac{1-e}{4}(v-\vs) - \frac{1+e}{4}|v-\vs|\sigma,&
		\end{array}
		\right.
	\end{equation}
	where $ v, v_{*} $ are the pre-collisional velocities and $ \sigma $ is a vector on the unit sphere $ \Sd^{2} $.
	Then, without considering the dependence on a spatial variable, we will study the following spatially homogeneous Boltzmann equation in the inelastic case 
	\begin{equation}\label{IB}
		\partial_{t} f(t,v) = Q_{e}(f,f) (t,v),
	\end{equation}
	associated with the non-negative initial condition 
	\begin{equation}\label{F0}
		f(0,v) = F_{0}(v).
	\end{equation}
Here $ f=f(t,v) $ is an unknown density function of a probability density function, and $  Q_{e}(f,f) $ is called an inelastic Boltzmann collision operator. 
	
The weak formulation of the equation can then be defined as 
	\begin{equation}\label{weak}
		\begin{split}
			&\int_{\mathbb{R}^3 }f(t,v)\psi(v) \,\rd v-\int_{\mathbb{R}^3 }f(0,v)\psi(v) \,\rd v= \int_0^t\int_{\bR^{3}} Q_{e}(g,f)(\tau,v)\psi(v) \,\rd v \,\rd \tau\\
			&= \frac{1}{2}\int_0^t\int_{\bR^{3}} \int_{\bR^{3}} \int_{\Sd^{2}} B(|v-v_*|,\sigma) g(\tau,\vs) f(\tau,v) \\&\qquad\qquad\qquad\qquad\qquad\qquad\times \left[ \psi(v') + \psi(\vs') - \psi(v) - \psi(\vs) \right] \,\rd \sigma \,\rd \vs \,\rd v \,\rd \tau,
		\end{split}
	\end{equation}
	for any test function $ \psi(v) \in C^2_b(\mathbb{R}^3)$, which is continuous and bounded up to its second-order derivative. The reason why the weak form \eqref{weak} is usually preferred in the study of the inelastic equation is that the post-collisional mechanism of the inelastic collision has been merely manifested in the test function $ \psi(v') $ and $ \psi(\vs') $. Although the restitution coefficient $ e $ might depend on the relative velocity of the colliding particles \cite{CHMR2020}, we will consider coefficient $ e $ as a constant in this paper. 
	
	Lastly, we also introduce the conservation of mass and momentum as well as the dissipation of energy during each inelastic collision, which are the direct consequences of \eqref{ve}:
	$$v'+v'_*=v+v_*,$$and \begin{equation}
	    \label{energyloss} |v|^2+|v_*|^2-|v'|^2+|v'_*|^2  = -\frac{1-e^2}{2}\frac{1-(\hat{v}_- \cdot \sigma)}{2}|v-v_*|^2<0,
	\end{equation}
 where we define  $\hat{v}_- =  \frac{v-v_{*}}{|v-v_{*}|}.$
        This further implies that the inelastic collision operator $Q_e$ formally satisfies
	\begin{equation}\label{conservQe}
        \int_{\mathbb{R}^3}
        \begin{pmatrix}	    
        1\\v
        \end{pmatrix} 
        Q_e (f,f)(v) \,\rd v = 0, \text{ but }
	\end{equation}
	\begin{equation}\label{disspationQe}
        \int_{\mathbb{R}^3} |v|^2 Q_e (f,f)(v) \,\rd v \le 0.
	\end{equation}
	
	\subsection{Hard potential collision kernel without angular cutoff}
	
	The Boltzmann collision operator \eqref{weak} contains a collision kernel $ B(|v-v_*|,\sigma) $ which further describes the types of the interactions of colliding particles. It is defined for measuring the intensity of the collision between interacting particles, which depends not only on the deviation angle $ \theta $, but also on the relative velocity $ |v-v_{*}| $. In general, the collision kernel cannot be explicitly defined, except under some specific circumstances such as the inverse-power-law interaction potential, where $ B(|v-v_*|,\sigma) $ is taken in the product form; i.e.,
	\begin{equation}\label{Bb}
		B\left(|v-v_{*}|, \sigma\right) = b\left( \cos\theta \right) \Phi\left(|v-v_{*}|\right) \ \text{with}\  \cos\theta = \hat{v}_- \cdot \sigma.
	\end{equation}
    Thanks to the standard symmetrization, we can restrict the range of $ \theta $ into $ [0,\pi/2] $ without loss of generality in the sense that
	\begin{equation}\notag
		\bar{b}(\cos\theta) = \left[ b(\cos\theta) + b(\cos\left(\pi-\theta\right))\right]\mathbf{1}_{0\leq \theta \leq \pi/2},
	\end{equation}
    which involves an asymptotic singularity in $ b(\cos\theta) $ as the deviation angle $ \theta $ approaches to zero:
	\begin{equation}\label{noncutoffnu}
		\sin \theta b(\cos\theta) \big|_{\theta\rightarrow 0^{+}} \sim K\theta^{-1-2s}, \quad \text{for}\ 0<s<1 \ \text{and} \  K >0.
	\end{equation}
    In order to overcome the difficulties induced from the singular behavior, the so-called Grad's cutoff assumption has been introduced cutting off a small angle nearby the singularity such that 
	\begin{equation}\label{cutoff1}
		\int_{\Sd^{2}} b_n\left(\hat{v}_- \cdot \sigma\right) \,\rd \sigma < \infty.
	\end{equation}
    Here, the angular kernel $b_n$ can be general enough as long as it is integrable in $\sigma$ and $b_n\to b$ as $n\to \infty$. One of the possible examples is the one that we provide in \eqref{bn} for the existence theory.

    For the hard potential collisions without cutoff, we consider $ B(|v-v_*|,\sigma) $ in the form of \eqref{Bb} with the following assumptions throughout the rest of the paper:
	\begin{itemize}
		\item The angular kernel $  b(\cos\theta) $ is not locally integrable but satisfies
		\begin{equation}\label{noncutoffb}
			\quad\exists\text{ some } \alpha_{0} \in (0,2]  \text{ such that } \int_{0}^{\frac{\pi}{2}} \sin^{\alpha_{0}}\left(\frac{\theta}{2}\right) b(\cos\theta) \sin\theta \,\rd\theta < \infty.
		\end{equation}
		
		\item The kinetic kernel $ \Phi(|v-v_{*}|) $ is in a power form and satisfies the \textit{hard potential} assumption that
		\begin{equation}\label{hard}
			\Phi(|v-v_{*}|) = |v-v_{*}|^{\gamma} \quad \text{with} \quad 0<\gamma\le 2. \ 
		\end{equation} 
	\end{itemize}
 
	\begin{remark}
        Several previous results in the case without angular cutoff such as {\rm\cite{AMUXY2010regularizing,GS2011,MUXY2009}} refer the \textit{hard potential} model to the case with an assumption that $\gamma+2s\ge0.$ In this paper, we only consider the case when $0<\gamma\le 2$ in which one can expect the moments-creation property as well. For the remaining case of $-2s\le \gamma\le 0$, we refer to {\rm\cite{Qi21_soft,Qi22}}.
	\end{remark}

	\subsection{Statement of our main results}
	\label{sub:main}
	
    Throughout this paper, we shall use the \textit{Japanese bracket} notation $ \left\langle \cdot \right\rangle = \sqrt{1+|\cdot|^{2}} $, and $ A \lesssim B \ (\text{or }B \lesssim A)$ means that there is a generic constant $ C>0 $ such that $ A \leq C B \ (\text{or }B \leq CA,\text{ resp.}) $.
	
    We would first like to introduce the global existence of a measure-valued solution to the inelastic Boltzmann equation without angular cutoff. We define the space $ P_{\alpha}(\mathbb{R}^{3}) $ as the set of probability measures on $ \mathbb{R}^{3} $ with finite moments up to the order $ \alpha $; i.e., if $P_{0}(\mathbb{R}^{3})$ is denoted as the set of all positive probability measures on $\mathbb{R}^{3}$,
		\begin{equation}\label{Palpha}
			\begin{split}
				P_{\alpha}(\mathbb{R}^{3}) &\eqdef \bigg\{ F(v) \in P_{0}(\mathbb{R}^{3}) \bigg|  \int_{\mathbb{R}^{3}} \,\rd F(v) =1, \ \int_{\mathbb{R}^{3}} |v|^{\alpha} \,\rd F(v) < \infty,\\
				&\text{and if}\ \alpha > 1, \ F\text{ further satisfies } \int_{\mathbb{R}^{3}} v_{j} \,\rd F(v) = 0, \ j = 1,2,3 \bigg\}.
			\end{split}
		\end{equation}
    Then we establish the following existence theorem under a general class of initial conditions without the smallness requirement nor an entropy bound.
	\begin{theorem}[Global existence]\label{main1}
	   Assume that $ e\in(0,1] $ and the collision kernel $ B\left(|v-v_{*}|, \sigma\right) $ satisfies the non-cutoff hard potential assumptions \eqref{noncutoffb} and \eqref{hard} with $ 0<\gamma\le 2 $. 
		For any initial datum $ F_{0}(v) \in P_{2+\kappa}\left(\mathbb{R}^{3}\right) $ with $ \kappa > 0 $, then there exists a measure-valued solution $ F_{t}(v) \in C\left( \left[0,\infty\right), P_{2+\kappa}\left( \mathbb{R}^{3} \right) \right) $ to the Cauchy problem \eqref{IB}-\eqref{F0}.
	\end{theorem}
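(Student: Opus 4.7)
The approach I would take is a regularization-and-compactness argument. Starting from a family $b_n$ of cutoff angular kernels satisfying \eqref{cutoff1} with $b_n \nearrow b$ pointwise as $n\to\infty$, one first constructs a sequence of measure-valued solutions $F^n_t$ to the corresponding cutoff inelastic Boltzmann equations. For each fixed $n$, the gain and loss operators in the cutoff problem split in the standard way, so existence in $C([0,\infty), P_2(\mathbb{R}^3))$ together with conservation of mass and momentum \eqref{conservQe} and dissipation of energy \eqref{disspationQe} follows from a fixed-point argument on the weak formulation \eqref{weak}, essentially along the lines of Mischler--Wennberg in the elastic case and its inelastic counterparts used in \cite{BGP04, MM2006hardsphere1}.

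The core step --- and what I expect to be the main obstacle --- is a uniform-in-$n$ a priori bound on the $(2+\kappa)$-moment $M_{2+\kappa}(t):=\int_{\mathbb{R}^3} \langle v \rangle^{2+\kappa}\, dF^n_t(v)$, which is precisely the new ingredient advertised in the abstract. Taking $\psi(v) = \langle v \rangle^{2+\kappa}$ in \eqref{weak}, the problem reduces to controlling the angular integral
\begin{equation*}
\int_{\Sd^2} b(\cos\theta) \bigl[\psi(v')+\psi(\vs')-\psi(v)-\psi(\vs)\bigr]\, d\sigma.
\end{equation*}
This requires a refined Povzner-type inequality adapted to the non-cutoff inelastic setting: I would split the bracket into a positive (growth) part that vanishes like $\sin^{\alpha_0}(\theta/2)$ as $\theta \to 0^+$ --- so that \eqref{noncutoffb} makes it integrable against $b(\cos\theta)\sin\theta$ --- and a strictly dissipative negative part, the latter being strengthened relative to the elastic counterpart thanks to the energy-loss identity \eqref{energyloss} with $e\in(0,1]$. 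The resulting estimate takes the schematic form $M'_{2+\kappa}(t) \le -\lambda M_{2+\kappa+\gamma}(t) + C\, M_{2+\kappa-\delta}(t)\, M_{\gamma+\delta}(t)$ for some $\delta,\lambda>0$; interpolation combined with the energy and mass bounds then closes a Gronwall inequality, giving uniform control of $M_{2+\kappa}$ on every $[0,T]$. The hard-potential restriction $\gamma>0$ and the moment margin $\kappa>0$ are both crucial here.

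With this uniform moment bound in hand, Prokhorov's theorem yields narrow tightness of $\{F^n_t\}$ for every $t$, while the weak equation \eqref{weak} applied to $C_b^2$ test functions --- for which $\psi(v')+\psi(\vs')-\psi(v)-\psi(\vs)$ vanishes quadratically as $\theta\to 0$ and is thus integrable against $b(\cos\theta)\sin\theta$ via \eqref{noncutoffb} with $\alpha_0 \le 2$ --- produces equicontinuity in $t$ of $\int\psi\, dF^n_t$. A diagonal Arzelà--Ascoli extraction then furnishes a subsequential limit $F_t \in C([0,\infty), P_2(\mathbb{R}^3))$, whose $(2+\kappa)$-moment remains finite by lower semicontinuity, placing it in $P_{2+\kappa}$. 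Finally, the uniform $(2+\kappa)$-moment bound together with $\gamma\le 2$ renders the kinetic factor $|v-\vs|^{\gamma}$ in \eqref{weak} uniformly integrable against $F^n_t\otimes F^n_t$, so one may simultaneously pass to the limit $b_n\to b$ and $F^n\to F$ via dominated convergence. This delivers the measure-valued solution asserted in Theorem~\ref{main1}.
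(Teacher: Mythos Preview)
Your overall strategy---cutoff approximation, a Povzner-based uniform $(2+\kappa)$-moment bound, Arzel\`a--Ascoli compactness, and passage to the limit in the weak formulation---is the same as the paper's. Two implementation differences are worth noting: (i) the paper carries out the cutoff existence and the compactness step on the Fourier side, via the inelastic Bobylev identity and a Banach fixed point in the characteristic-function space $\mathcal{K}^\alpha$, then extracts the limit by Arzel\`a--Ascoli on the transforms $\varphi^n(t,\xi)$, whereas you propose working directly with measures and Prokhorov tightness; (ii) the paper truncates not only the angular kernel but also the kinetic factor, $\Phi_n(|v-v_*|)=|v-v_*|^\gamma\phi_c(|v-v_*|/n)$, which is needed so that the gain/loss split is bounded in the cutoff fixed-point argument for hard potentials---your sketch omits this.

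One point in your Povzner discussion is off. The inelastic energy loss \eqref{energyloss} does \emph{not} strengthen the dissipative part of the Povzner bracket; the inelastic case is harder, not easier, because the elastic symmetries are broken and the standard decompositions no longer apply directly. The paper's key technical device (Proposition~\ref{Povzner}) is a change of angular variables $(\theta,\phi)\mapsto(\chi,\mu)$ to the center-of-momentum frame \eqref{com}, followed by two integrations by parts in an auxiliary variable $\eta=\cos\mu-\cos\beta\cos\chi$; this turns the $\psi$-differences into $\psi''$-integrals weighted by $Z^2\eta_0^2\lesssim |v|^2|v_*|^2\sin^2\theta$, and it is this $\sin^2\theta$ factor---not the energy defect---that neutralizes the angular singularity via \eqref{noncutoffb}. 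The negative term in \eqref{Hbound} comes from convexity (the appendix lemma), not from \eqref{energyloss}. With that correction, your schematic moment inequality and Gronwall closure are in line with Corollary~\ref{Proposition2+kappa}.
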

 
    \begin{remark}
    (i) The definition of the measure-valued solution follows from {\rm{\cite[Definition 1.1]{morimoto2016measure}}}.\\
    (ii) The uniqueness of a measure-valued solution for the inelastic case without angular cutoff has not been known so far in general, to the best of the authors' knowledge. In the elastic case for non-cutoff hard potentials, one can obtain the uniqueness results by assuming the initial boundedness of either the exponential moment $$\int_{\mathbb{R}^d}e^{\varepsilon_0|v|^\gamma} f_0(v) \,\rd v < +\infty,$$ as in {\rm{\cite[Corollary 2.3]{FM2009moderate}}} or a more regular norm 
    $$\|f_0\|_{W^{1,1}_q}<+\infty,\ \text{for} \ q \ge 2,$$ 
    as in {\rm{\cite[Theorem 2]{DM2009stability}}}.  	
    \end{remark}

    In addition, we also present the moments-creation property for any measure-valued solution $ F_{t} $ with a suitable dissipation condition.
	\begin{theorem}[Creation of moments]\label{main2}
    Assume that $ e \in (0,1] $, $ \kappa > 0 $, and the collision kernel $ B\left(|v-v_{*}|, \sigma\right)  $ satisfies the non-cutoff hard potential assumptions \eqref{noncutoffb} and \eqref{hard} with $ 0<\gamma\le 2 $. For any measure-valued solution $ F_{t}(v) \in C\left(  [0,\infty), P_{2+\kappa}\left(\mathbb{R}^{3}\right) \right) $ to problem \eqref{IB}-\eqref{F0} with the initial datum $ F_{0}(v) \in P_{2+\kappa}(\mathbb{R}^{3}) $, the following \textit{uniform-in-time} moments creation property holds: for any $l\in\mathbb{R}$ and $ t_{0} \in \left( 0,\infty \right) $, we have
		\begin{equation}\label{momentcon}
			\sup\limits_{t \geq t_{0}} \int_{\bR^{3}} \left\langle v \right\rangle^{l} \,\rd F_{t}(v) < \infty.
		\end{equation}   
	\end{theorem}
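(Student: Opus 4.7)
The plan is to derive a scalar differential inequality for $M_l(t):=\int_{\mathbb{R}^3}\langle v\rangle^l\,\rd F_t(v)$ whose structure forces $M_l$ to become finite instantaneously for $t>0$, even when $M_l(0)=+\infty$. Fix $l>2+\kappa$ (moments of order $\leq 2+\kappa$ are already propagated by Theorem~\ref{main1}). I would plug the bounded $C_b^2$ truncation $\psi_R(v):=\langle v\rangle^l\chi(v/R)$ into the weak formulation \eqref{weak} with $g=f=F_t$, where $\chi$ is a smooth cutoff, and send $R\to\infty$ at the end via monotone convergence together with the bound obtained for $M_l(t)$ uniformly in $R$.

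The key ingredient is the refined Povzner-type inequality that the paper establishes for the inelastic non-cutoff kernel: there exist $K_l>0$ and some $G_l(v,v_*)$ of strictly lower polynomial growth than $\langle v\rangle^l+\langle v_*\rangle^l$ such that
\begin{equation*}
    \int_{\mathbb{S}^2}b(\cos\theta)\bigl[\psi_R(v')+\psi_R(v'_*)-\psi_R(v)-\psi_R(v_*)\bigr]\,\rd\sigma\leq -K_l\bigl(\langle v\rangle^l+\langle v_*\rangle^l\bigr)+G_l(v,v_*),
\end{equation*}
uniformly in $R$. The non-cutoff singularity is absorbed by the second-order Taylor cancellation near $\theta=0$ combined with \eqref{noncutoffb}, while the inelasticity only sharpens $K_l$ through the energy-loss identity \eqref{energyloss}. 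Multiplying by $\Phi(|v-v_*|)=|v-v_*|^\gamma$ and integrating against $\rd F_t\otimes\rd F_t$ yields
\begin{equation*}
    \frac{\rd}{\rd t}M_l(t)\leq -K_l\int_{\mathbb{R}^6}|v-v_*|^\gamma\langle v\rangle^l\,\rd F_t(v)\,\rd F_t(v_*)+C_l\int_{\mathbb{R}^6}|v-v_*|^\gamma G_l(v,v_*)\,\rd F_t(v)\,\rd F_t(v_*).
\end{equation*}

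Using the symmetrized lower bound $|v-v_*|^\gamma\gtrsim\langle v\rangle^\gamma-\langle v_*\rangle^\gamma$ together with the normalization $M_0\equiv 1$ and the energy dissipation $M_2(t)\leq M_2(0)$ from \eqref{disspationQe}, I can H\"older-interpolate the lower-order moments in the remainder between $M_2$ and $M_{l+\gamma}$. Jensen's inequality on the probability measure $F_t$ gives $M_{l+\gamma}(t)\geq M_l(t)^{1+\gamma/l}$, which turns the dissipation term into a super-linear power of $M_l$ itself. Closing the estimate then yields a scalar ODI
\begin{equation*}
    \frac{\rd}{\rd t}M_l(t)\leq -A_l\,M_l(t)^{1+\gamma/l}+B_l,
\end{equation*}
with $A_l,B_l>0$ depending only on $M_2(0)$, $K_l$, $G_l$, and $l$. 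The right-hand side is strictly negative on $\{M_l>(B_l/A_l)^{l/(l+\gamma)}\}$, and in that super-critical regime the universal super-solution $y_*(t)=(A_l\gamma t/l)^{-l/\gamma}$ dominates $M_l(t)$ irrespective of the initial value; comparing on $[0,t_0]$ and using the negativity beyond the threshold afterwards gives
\begin{equation*}
    \sup_{t\geq t_0}M_l(t)\leq \max\Bigl\{(B_l/A_l)^{l/(l+\gamma)},\,\bigl(A_l\gamma t_0/l\bigr)^{-l/\gamma}\Bigr\}<\infty,
\end{equation*}
which is exactly \eqref{momentcon}.

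The principal obstacle is not the ODE endgame but the Povzner step that feeds it: the second-order cancellation that compensates the non-integrable kernel $b(\cos\theta)$ is classical for the elastic collision map, but the inelastic map \eqref{ve} inserts additional $(1-e)/4$ contributions into the Hessian of $\psi$ at $\theta=0$, and one has to verify that the resulting remainder stays integrable against $\sin^{\alpha_0}(\theta/2)\,b(\cos\theta)\sin\theta$ with the exponent $\alpha_0$ permitted by \eqref{noncutoffb}, while extracting a \emph{negative} constant $K_l$ of the right size to dominate the remainder $G_l$. Once this inequality is secured with the correct dependence of $K_l$ on $l$, the rest is the measure-valued adaptation of the Desvillettes--Wennberg--Mischler--Wennberg scheme via the truncations $\psi_R$, which is where care is needed to interchange limits but presents no essentially new difficulty.
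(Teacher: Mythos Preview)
Your displayed Povzner inequality cannot hold: at $v=v_*$ the collision is trivial (see \eqref{ve}), so the left side is identically zero, whereas your right side equals $-2K_l\langle v\rangle^l+G_l(v,v)\to-\infty$ as $|v|\to\infty$ since you stipulate that $G_l$ has strictly lower growth. What the paper actually proves (Proposition~\ref{Povzner}) has a negative term of \emph{product} form, $-C_1\bigl(\langle v\rangle^{2+\kappa}|v_*|^2+\langle v_*\rangle^{2+\kappa}|v|^2\bigr)$, which correctly vanishes on the diagonal; after multiplication by $|v-v_*|^\gamma$ and integration this produces dissipation of the type $M_{2+\kappa+\gamma}\cdot M_{2+\gamma}$ (cf.~\eqref{middlestep}), so any ODI you extract will carry a lower-moment factor in $A_l$---and since $M_2$ is genuinely decreasing in the inelastic regime, the uniform-in-time conclusion does not follow from the bare comparison principle. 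A second issue is that your truncation $\psi_R(v)=\langle v\rangle^l\chi(v/R)$ is not convex, so Proposition~\ref{Povzner} does not apply to it; the paper instead builds a convex approximation $\psi_{\kappa,m}$ by linearizing $\psi_2$ beyond $x=m$ (see \eqref{Psi1}--\eqref{pkm}) and disposes of the linear tail via the exact energy-loss identity \eqref{energyloss} together with mass conservation.

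The paper's argument is also organised differently from yours: rather than a single scalar ODI for $M_l$, it iterates a $\gamma$-gain. The integrated dissipation bound \eqref{t0} (fed by Corollary~\ref{Proposition2+kappa.noncut}) gives $\int_0^{t_0}M_{2+\kappa+\gamma}(\tau)\,\mathrm{d}\tau<\infty$, hence $M_{2+\kappa+\gamma}(t_1)<\infty$ for some $t_1\in(0,t_0)$; restarting from $t_1$ with $\kappa_1=\kappa+\gamma$ and repeating reaches arbitrary order $l$ at times accumulating at $t_0$, after which a separate continuation argument borrowed from \cite{LuMouhot2012,morimoto2016measure} furnishes the supremum over all $t\ge t_0$. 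Your Desvillettes-type ODI route is not unsalvageable---with the correct product-form Povzner, convex truncations, and a positive lower bound on $M_2$ on compact time intervals it can be pushed through---but as written the inequality that drives it is false.
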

 
	\begin{remark}
        Here we would like to emphasize that the proofs of the global well-posedness and the moments-creation property heavily depend on the establishment of a refined version of the Povzner inequality for inelastic collision under both cutoff and non-cutoff assumption.  For instance, the establishment of an improved Povzner inequality relieves the condition of moments to guarantee the moments-gain property in Theorem \ref{main2} above. One of the key ideas for the establishment of a refined inequality is to work on the center-of-momentum coordinate system \eqref{com} instead of working with the standard inelastic post-collisional representations \eqref{ve}. Our work under the presence of an angular singularity is motivated by the work in the elastic case {\rm\cite{MW1999,morimoto2016measure}}. Since the inelastic case involves additional parameters that directly affect the decomposition of collisional scattering angle, this requires a more technical (but careful) analysis of the computations of the Jacobian determinants for the varied changes of variables during the journey of the proof of the refined Povzner inequality {\rm(Proposition \ref{Povzner})}. 
	\end{remark}
	
	\subsection{Outline of the paper}
	\label{sub:plan}
	
	The rest of the paper is organized as follows. In Section~\ref{sec:povzner}, we will first establish a refined Povzner-type estimate for the inelastic Boltzmann equation under both cutoff and non-cutoff assumptions, which works as a powerful tool in the following sections. Using the Povzner estimate, we establish the existence theory of the measure-valued solution to the inelastic Boltzmann equation with hard potential in Section~\ref{sec:existence}. Then the moments-creation property of the measure-valued solution will further be provided in Section~\ref{sec:moment}.

	\section{New non-cutoff Povzner estimates for the inelastic hard potentials}
	\label{sec:povzner}

	One of the main properties that arise in the hard potential case is the moments-creation property. 
	For the proof of global well-posedness and the moments-creation property under the non-cutoff situation, the establishment of an improved Povzner-type estimate is crucial. The original form of the Povzner-type estimate is the useful inequality for $\triangle \psi = \psi(v'^2) + \psi(v'^2_*) - \psi(v^2) - \psi(v_*^2)$ with $\psi(x) = x^p$, proposed by Povzner \cite{Povzner}, which has been improved by Elmroth \cite{Elmroth83} and Wennberg \cite{Wennberg97} to obtain the moments estimates for the elastic Boltzmann equation under the cutoff assumption. Still, for the  case of elastic collision, Bobylev \cite{Bobylev97_moment} and Mischler-Wennberg \cite{MW1999} further improve this type estimate towards the integral of $\triangle \psi$ in the average sense, instead of $\triangle \psi$ itself. In terms of the inelastic Boltzmann equation, the Povzner-type estimate is derived by Bobylev-Gamba-Panferov \cite{BGP04} and extended for more general $\psi$ by Gamba-Panferov-Villani \cite{GambaDiffusively} under the cutoff assumption.
 
    Therefore, all the existing Povzner-type estimates are unfortunately not sufficient enough to prove the well-posedness of the inelastic Boltzmann equation in the non-cutoff case and the moments-creation property to the best of the authors' knowledge. Therefore, we establish in this section a non-cutoff version of the Povzner inequality for the inelastic Boltzmann equation for the proof of the moments-creation property.

	\begin{proposition}\label{Povzner}
		Assume that $ e\in(0,1] $ and the angular cross-section $b$ satisfies \eqref{noncutoffnu} and \eqref{noncutoffb}. Let  $ \psi $ denote a convex function \begin{equation}\label{psionetwo} \psi_{1}(x) \eqdef x^{1+\frac{\kappa}{2}}\ \text{  or  }\ \psi_{2}(x) \eqdef  (1+x)^{1+\frac{\kappa}{2}}-1,\end{equation} with $ \kappa > 0 $. Let  $ K^{e}(v,v_{*}) $ be defined for every $ \kappa > 0 $ as follows:
		\begin{equation}\label{Ke1}
			\begin{split}
				K^{e}(v,v_{*}) = \int_{\Sd^{2}} b(\hat{v}_- \cdot\sigma) \left[ \psi(|v'|^{2}) + \psi(|\vs'|^{2}) - \psi(|v|^{2}) - \psi(|\vs|^{2}) \right] \,\rd\sigma.
			\end{split}
		\end{equation}
		Then, we can represent $K^e$ as $ K^{e}(v,v_{*}) = H(v,v_{*}) + G(v,v_{*}) $ where $H$ and $G$ further satisfy the following bounds:
		\begin{equation}\label{Hbound}
			 H(v,v_{*}) \leq -C_1(|v|^{2+\kappa}|v_*|^2+|v_*|^{2+\kappa}|v|^2)  +C_2 (\langle v\rangle ^{1+\kappa}|v_*|+\langle v_*\rangle^{1+\kappa}|v|),
		\end{equation}
		and
		\begin{equation*}
			G(v,v_{*})\le \left\{ 
			\begin{aligned}
				& \ C_3 |v|^{2} |v_{*} |^{2}, \ \text{if}\ \kappa < 2, \\
				& \ C_4 \left( |v_{*}|^{2} \left\langle v\right\rangle^{\kappa} + |v|^{2} \left\langle v_{*}\right\rangle^{\kappa} \right), \ \text{if}\ \kappa \geq 2,
			\end{aligned}
			\right.
		\end{equation*}
		where $C_1,\ C_2,\ C_3$, and $C_4$ are constants that depend on $\lambda$, $ e $ and $\kappa$.
	\end{proposition}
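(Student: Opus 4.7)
My plan is to pass to the center-of-momentum (COM) frame and then split the angular integrand of $K^e$ into an energy-dissipative contribution that defines $H$ and a mixing contribution that defines $G$. With $U := (v+v_*)/2$, $u := v-v_*$ and $\sigma = \cos\theta\,\hat u + \sin\theta\,\omega$ for $\omega \in \mathbb{S}^1 \cap \hat u^\perp$, I set $w := \tfrac{1-e}{4}u + \tfrac{1+e}{4}|u|\sigma$ so that \eqref{ve} becomes $v' = U+w$ and $v'_* = U-w$. Direct algebra gives $|v'|^2 = A+B$, $|v'_*|^2 = A-B$ with $A := |U|^2+|w|^2$, $B := 2U\cdot w$, together with the energy-loss identity $|v'|^2+|v'_*|^2 = |v|^2+|v_*|^2 - \Delta$ where $\Delta := \tfrac{1-e^2}{4}|u|^2(1-\cos\theta) \ge 0$ as in \eqref{energyloss}. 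I then split the integrand as
\begin{equation*}
\psi(|v'|^2)+\psi(|v'_*|^2)-\psi(|v|^2)-\psi(|v_*|^2)
= \underbrace{[2\psi(A)-\psi(|v|^2)-\psi(|v_*|^2)]}_{H_\sigma \le 0}
+ \underbrace{[\psi(A+B)+\psi(A-B)-2\psi(A)]}_{G_\sigma \ge 0},
\end{equation*}
and define $H$ and $G$ by integrating each piece against $b(\cos\theta)\,d\sigma$. The signs follow from the convexity of $\psi$ together with $A \le (|v|^2+|v_*|^2)/2$ and the monotonicity of $\psi$ on $[0,\infty)$. The non-cutoff singularity of $b$ is tamed at this level because $H_\sigma$ carries the factor $\Delta \propto 1-\cos\theta$ and $G_\sigma$ vanishes quadratically in $B \propto \sin\theta$, both integrable under \eqref{noncutoffb}.

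For the bound on $H$, I apply pointwise in $\sigma$ the sharpened Jensen estimate $2\psi(A) - \psi(|v|^2) - \psi(|v_*|^2) \le -\Delta\,\psi'(A)$, obtained by combining the mean-value bound $\psi(A) \le \psi(M) - \tfrac{\Delta}{2}\psi'(A)$ (with $M := (|v|^2+|v_*|^2)/2 \ge A$) and the classical midpoint bound $\psi(M) \le \tfrac{1}{2}[\psi(|v|^2)+\psi(|v_*|^2)]$. Because $\theta \in [0,\pi/2]$ forces $\cos\theta \ge 0$, one has $A \gtrsim |v|^2+|v_*|^2$ with constant depending on $e$, so $\psi'(A) \gtrsim (|v|^2+|v_*|^2)^{\kappa/2}$ for both $\psi_1$ and $\psi_2$. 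Integrating against $b(\cos\theta)\,d\sigma$ via the finite moment $\int_{\mathbb{S}^2} b(\cos\theta)(1-\cos\theta)\,d\sigma$ from \eqref{noncutoffb} yields a leading contribution of order $-c\,|u|^2(|v|^2+|v_*|^2)^{\kappa/2}$; expanding $|u|^2 = |v|^2+|v_*|^2 - 2v\cdot v_*$ and controlling the cross term by $|v\cdot v_*|\le |v||v_*|$ along with Young's inequality then converts the bound into the symmetric product form of \eqref{Hbound}, the lower-order remainder $C_2(\langle v\rangle^{1+\kappa}|v_*|+\langle v_*\rangle^{1+\kappa}|v|)$ absorbing the $\langle\cdot\rangle$ versus $|\cdot|$ corrections.

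For $G$, a second-order Taylor expansion produces $G_\sigma \le 2\psi''(\zeta_\sigma)B^2$ with $\zeta_\sigma$ between $A\pm|B|$, and $|B|^2 \le 4|U|^2|w|^2 \lesssim |U|^2|u|^2$. The azimuthal integration over $\omega$ is performed by choosing an orthonormal frame adapted to the projection of $U$ onto $\hat u^\perp$; the odd-in-$\omega$ cross-terms in $B^2$ then average to zero, yielding $\int_{\mathbb{S}^2} b(\cos\theta) B^2\,d\sigma \lesssim |U|^2|u|^2 \int b\sin^2\theta\,d\sigma \lesssim |v|^2|v_*|^2$. This is exactly the Jacobian bookkeeping flagged in the Remark after the statement. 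For $\kappa<2$ the factor $\psi_1''(\zeta_\sigma)$ is uniformly bounded over the relevant range, giving $G \le C_3|v|^2|v_*|^2$; for $\kappa \ge 2$ it is $\psi_2$ that avoids the spurious singularity of $\psi_1''(x) = p(p-1)x^{\kappa/2-1}$ at the origin, and the factor $\psi_2''(\zeta_\sigma) \lesssim \langle \zeta_\sigma\rangle^{\kappa-2} \lesssim \langle v\rangle^{\kappa-2}+\langle v_*\rangle^{\kappa-2}$ produces the alternative $C_4(|v_*|^2\langle v\rangle^\kappa + |v|^2\langle v_*\rangle^\kappa)$.

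The main obstacle, and the real refinement beyond the cutoff Povzner estimates of Bobylev-Gamba-Panferov and Mischler-Wennberg, is that neither $\int b\,d\sigma$ nor $\int b\cos\theta\,d\sigma$ is finite; every intermediate bound must be expressed using only $\int b(1-\cos\theta)\,d\sigma$ and $\int b\sin^2\theta\,d\sigma$. This forces one to keep the $\sigma$-dependence of $A$ (through $\Delta$) inside the integral at every step rather than factoring $\psi(A)$ out of an $\int b\,d\sigma$, and it is in carrying out the azimuthal Jacobian change of variables under this constraint that the most delicate technical work lies.
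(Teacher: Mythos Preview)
Your proposed split has a genuine gap that breaks the argument in the non-cutoff regime. The claim that ``$B\propto\sin\theta$'' is false: with $w=\tfrac{1-e}{4}u+\tfrac{1+e}{4}|u|\sigma$ and $\sigma=\cos\theta\,\hat u+\sin\theta\,\omega$ one computes
\[
B=2U\cdot w=\frac{1-e+(1+e)\cos\theta}{2}\,(U\cdot u)+\frac{1+e}{2}\,|u|\sin\theta\,(U\cdot\omega),
\]
so at $\theta=0$ one has $B=U\cdot u=\tfrac12(|v|^2-|v_*|^2)$, generically nonzero. Hence $G_\sigma=\psi(A+B)+\psi(A-B)-2\psi(A)$ does \emph{not} vanish at the grazing angle, and $G=\int_{\mathbb S^2}b\,G_\sigma\,\rd\sigma=+\infty$ whenever $|v|\neq|v_*|$; correspondingly $H=\int b\,H_\sigma\,\rd\sigma=-\infty$, and the decomposition $K^e=H+G$ is only formal. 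The azimuthal average does not rescue this: averaging $B^2$ over $\omega$ still leaves the $\omega$-independent piece $B_0^2=\bigl[\tfrac{1-e+(1+e)\cos\theta}{2}(U\cdot u)\bigr]^2$, which carries no $\sin^2\theta$. A related slip is the claim $|U|^2|u|^2\lesssim|v|^2|v_*|^2$; take $v_*=0$ to see this fails, and note that what is actually bounded by $|v|^2|v_*|^2$ is $|U\times u|^2=|v\times v_*|^2$, not $|U|^2|u|^2$.

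The paper's route is to split \emph{after} a genuine azimuthal integration. One passes to the angles $(\chi,\mu)$ with $\chi=\angle(\omega,\hat v_-)$ and $\mu=\angle(\omega,\hat v_+)$, computes the Jacobian of $(\theta,\phi)\mapsto(\chi,\mu)$, and then integrates by parts twice in $\eta=\cos\mu-\cos\beta\cos\chi$. This produces a $G$-part carrying $\psi''$ multiplied by the factor $Z^2\eta_0^2=a_+^2\,|v\times v_*|^2\sin^2\theta$, which supplies both the $\sin^2\theta$ needed for integrability against $b$ and the cross-product bound $|v\times v_*|^2\le|v|^2|v_*|^2$. The leftover $H$-part is not your $2\psi(A)$ but the azimuthal-center contribution $\pi[\psi(Y+Z\cos\beta\cos\chi)+\psi(Y-Z\cos\beta\cos\chi)]$; using $\lambda^2=a_+^2+a_-^2+2a_+a_-\cos\theta$ and $\lambda\cos\chi=a_+\cos\theta+a_-$ one checks that $Y\pm Z\cos\beta\cos\chi$ differ from $|v|^2,|v_*|^2$ only at order $O(1-\cos\theta)$, so this integrand is also integrable against $b$. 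Your pointwise-in-$\sigma$ split is essentially the cutoff Povzner manoeuvre; the refinement needed for non-cutoff is precisely this azimuthal integration-by-parts, and your proposal misses it.
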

 \begin{remark}
     In Proposition \ref{Povzner}, if $\psi$ is chosen to be $\psi_2$, then we have a better bound in \eqref{Hbound} as follows:
     \begin{equation}\label{Hboundbetter}
			 H(v,v_{*}) \leq -C_1(\langle v\rangle^{2+\kappa}|v_*|^2+\langle v_*\rangle ^{2+\kappa}|v|^2)  +C_2 (\langle v\rangle ^{1+\kappa}|v_*|+\langle v_*\rangle^{1+\kappa}|v|).
		\end{equation}
 \end{remark}
    Before we move onto the proof, we first introduce several different coordinates that we use. Our key steps are on the several series of changes of angular variables. The first one is that we decompose the angular variable into the standard polar coordinates $ \sigma \longmapsto (\theta, \phi)$ with a specific choice of the $z$-axis motivated by \cite{MW1999,morimoto2016measure}, where the authors take advantage of the Povzner estimates in the elastic case. Then the key step is that we further proceed to define our own special angles $\chi,\mu$ and take  $(\theta, \phi) \longmapsto (\chi,\mu) $ to further improve the estimates even in the non-cutoff inelastic situation.

    To this end, we first define and write the cutoff version $ K_{n}^{e}(v,v_{*}) $ of the operator $K^e$ as
		\begin{equation}\label{Ke}
			\begin{split}
				K_{n}^{e}(v,v_{*}) = \int_{\Sd^{2}} b_{n}(\hat{v}_-\cdot\sigma) \left[ \psi(|v'|^{2}) + \psi(|\vs'|^{2}) - \psi(|v|^{2}) - \psi(|\vs|^{2}) \right] \,\rd\sigma,
			\end{split}
		\end{equation}where $b_n$ is a mollified angular kernel with Grad's cutoff that satisfies \eqref{cutoff1}. This includes the specific one \eqref{bn} that we use.
    In terms of distinguishing inelastic collisions, it would be more advantageous to parametrize the post-collisional velocities $ v' $ and $ v'_{*} $ in the \textit{center-of-momentum} coordinate system as illustrated in Figure \ref{fig1}, where the similar coordinate system used to be applied in~\cite{BGP04,GambaDiffusively} as well.
	

	\begin{figure}
	\centering
    \begin{tikzpicture}
        [scale=4,
		>=stealth,
		point/.style = {draw, circle,  fill = black, inner sep = 1pt},
		dot/.style   = {draw, circle,  fill = black, inner sep = .2pt},
		]
    \input{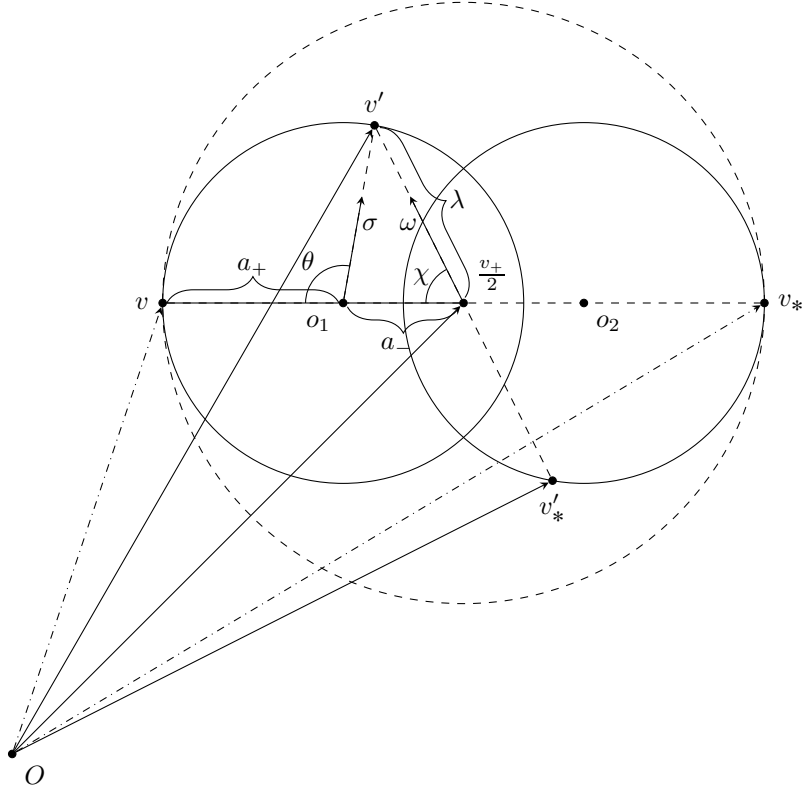}
    \end{tikzpicture}
    \caption{Illustration of the \textit{center-of-momentum} coordinate system in an inelastic collisional process.} 
    \label{fig1}
    \end{figure}
    
	
	We begin with setting
		\begin{equation}\label{com}
			v' = \frac{v_{+} + \lambda|v_{-}|\omega}{2} \quad \text{and}\quad v'_{*}= \frac{v_{+} - \lambda|v_{-}|\omega}{2},
		\end{equation}
		where $ v_{+} = v + v_{*} $, $ v_{-} = v - v_{*} $, and $ \omega \in \mathbb{S}^{2} $ is a unit vector. Then we observe
		\begin{equation}\label{LO}
			\lambda \omega = a_{+} \sigma + a_{-}\hat{v}_{-},
		\end{equation}
		with $ a_{+} = \frac{1+e}{2} $, $ a_{-} = \frac{1-e}{2} $  and $ \hat{v}_{-} = \frac{v_{-}}{|v_{-}|} $. Therefore, one can see from Figure \ref{fig1} that
		\begin{equation}\notag
			\lambda = \lambda(\cos\chi) = a_{-}\cos\chi + \sqrt{a_{-}^{2}(\cos^{2}\chi - 1) + a_{+}^{2}},
		\end{equation}
		where $ \chi $ is the angle between $ v_{-} $ and $ \omega $. Then, by a direct calculation or \cite[pp. 514]{GambaDiffusively} we note that
		\begin{equation*}
			0 < e \leq \lambda(\cos\chi) \leq 1,
		\end{equation*}
		for all $ \chi $. With this parametrization, we can represent the size of the post-collisional velocities as
		\begin{equation}\label{v'2}
			\begin{split}
				|v'|^{2} &= \frac{|v_{+}|^{2}+ \lambda^{2}|v_{-}|^{2} + 2\lambda|v_{+}||v_{-}|\cos\mu}{4}\\
				&= Y(\chi) + Z(\chi)\cos\mu,
			\end{split}
		\end{equation}
        and
		\begin{equation}\label{vs'2}
			\begin{split}
				|v'_{*}|^{2} &= \frac{|v_{+}|^{2}+ \lambda^{2}|v_{-}|^{2} -2\lambda|v_{+}||v_{-}|\cos\mu}{4}\\
				&= Y(\chi) - Z(\chi)\cos\mu,
			\end{split}
		\end{equation}
		where 
            \begin{equation}\label{yzdef} 
            Y(\chi) = \frac{|v_{+}|^{2} + \lambda^{2}|v_{-}|^{2}}{4}, \quad   Z(\chi) = \frac{\lambda|v_{+}||v_{-}|}{2}. 
            \end{equation} 
        Here $ \mu $ is the angle between the vector $ v_{+} $ and $ \omega $. Moreover, by taking an inner-product with $ \hat{v}_{-} $ on both sides of relation~\eqref{LO}, we can also derive that
        \begin{equation}\label{lamda}
            \lambda \cos\chi = a_{+} \cos\theta + a_{-}.
        \end{equation}
        Then, this provides
		\begin{equation}\label{Lo1}
			\begin{split}&\cos\theta = \frac{\lambda\cos\chi - a_{-}}{a_{+}} = \frac{\left[a_{-}\cos\chi + \sqrt{a_{-}^{2}(\cos^{2}\chi - 1) + a_{+}^{2}} \right]\cos\chi - a_{-}}{a_{+}},
			\end{split}
		\end{equation}
	where $ \theta $ is the deviation angle between $ \sigma $ and $ \hat{v}_- $, manifested in the standard decomposition of $ \sigma \in \Sd^{2} $.
		
    We would like to mention that the estimate with respect to $ \sigma $ is now transferred to the estimates of $ \theta $ and $ \phi $, and the singularity of $ \theta $ can now be canceled by the ``good'' terms provided from the integration by part in $ \phi $.
	Consequently, by noticing that $ |v'|^{2} $ and $ |v'_{*}|^{2} $ have been represented in the angle $ \chi $ and $ \mu $ instead of $ \theta $ and $ \phi $, we can have a refined version of the inelastic Povzner inequality which follows from the Jacobian determinant of the transformation $ \sigma \longmapsto (\theta, \phi) \longmapsto (\chi,\mu) $. This Jacobian determinant will be computed in the following proof. The proof also requires the use of the convexity of specifically chosen functions $\psi$, as well as the use of the monotonicity of $\psi$ for $x>0$. These conditions will be used to provide additional room to cancel the angular singularity.

	\begin{proof}[Proof of Proposition \ref{Povzner}]
		As the first step of the standard polar coordinate change $ \sigma \longmapsto (\theta, \phi) $ is now clear, our focus will now be on the second step $(\theta, \phi) \longmapsto (\chi,\mu) $.
        We first attempt to find the differential relation between $ \cos\theta = A $ and $ \cos\chi = B$ such that, by \eqref{Lo1},
		\begin{equation}\notag
			\begin{split}
				\frac{\rd A}{\rd B} & =  \frac{1}{a_{+}} \left[ \frac{a_{-}^{2}B^{2}}{\sqrt{a_{-}^{2}(B^{2}-1)+a_{+}^{2}}}+ 2a_{-}B  + \sqrt{a_{-}^{2}(B^{2}-1)+a_{+}^{2}} \right]\\
				& = \frac{\left[ a_{-}B + \sqrt{a_{-}^{2}(B^{2}-1)+a_{+}^{2}}\right]^{2}}{a_{+}\sqrt{a_{-}^{2}(B^{2}-1)+a_{+}^{2}}}.
			\end{split}
		\end{equation}
		In the original representation in the spherical coordinates 
        $$ \sigma = (\cos\theta, \ \sin\theta \cos\phi, \  \sin\theta \sin\phi) ,$$ 
            the singularity of the collision kernel $ b(\cos\theta) $ appears when $ \theta \rightarrow 0 $, i.e., $ A= \cos\theta \rightarrow 1 $. By  \eqref{Lo1}, this corresponds to $ B = \cos\chi \rightarrow 1$ as well. Furthermore, when $ A = \cos\theta \rightarrow 0 $, it again follows from the direct calculation via \eqref{Lo1} that,
		\begin{equation}\label{Lo2}
			\begin{split}
				&\ 0 = A = \frac{\left[a_{-}B + \sqrt{a_{-}^{2}(B^{2} - 1) + a_{+}^{2}} \right]B - a_{-}}{a_{+}},  \\
				\Longrightarrow &\  \left[a_{-}B + \sqrt{a_{-}^{2}(B^{2} - 1) + a_{+}^{2}} \right]B = a_{-},\\
				\Longrightarrow&\  B\sqrt{a_{-}^{2}(B^{2} - 1) + a_{+}^{2}} = a_{-} - a_{-}B^2,\\
				\Longrightarrow&\  B^2[a_{-}^{2}(B^{2} - 1) + a_{+}^{2}] = a_{-}^{2}\left(1- 2B^2+B^{4}\right),\\
				\Longrightarrow&\  B^{2}(a_{+}^{2} + a_{-}^{2}) = a_{-}^{2},\\
				\Longrightarrow&\  B = \frac{a_{-}}{\sqrt{a_{+}^{2} + a_{-}^{2}}}.
			\end{split}
		\end{equation}
		Thus, the limit $A\to 0$ corresponds to the limit $ B = \cos\chi \rightarrow \frac{a_{-}}{\sqrt{a_{+}^{2} + a_{-}^{2}}} $.
	Hence, the change of variables $  (\theta,\phi) \longmapsto (\chi,\mu) $ can be re-written as $  (A,\phi) \longmapsto (B,\mu)  $, and we further note that the Jacobian determinant $  \left| \frac{\partial (A,\phi)}{\partial(B,\mu)} \right| $ is
		\begin{equation}\label{J1}
			\left| \frac{\partial (A,\phi)}{\partial(B,\mu)} \right| \eqdef\left|
			\begin{array}{cc}
				\frac{\partial A}{\partial B} & \frac{\partial A}{\partial \mu} =0 \\
				\frac{\partial \phi}{\partial B} & \frac{\partial \phi}{\partial \mu} \\
			\end{array}
			\right|.
		\end{equation}
    Thus, in order to compute the Jacobian determinant $ \left| \frac{\partial (A,\phi)}{\partial(B,\mu)} \right| $, we need to calculate $ \Big| \frac{\partial \phi}{\partial \mu} \Big| $. To express the relationship between $\mu$ and $\phi$, we devise a different coordinate system as follows.	In the coordinate system $ (\hat{v}_{-}, \hat{j}, \hat{h}) $, where 
    \begin{equation*}
     \hat{j} = \frac{v \times v_*}{|v \times v_*|}, \quad  \hat{h} = \hat{v}_{-} \times \hat{j} = \frac{((v-v_*)\cdot v)v_* - ((v-v_*)\cdot v_*)v}{|v-v_*||v \times v_*|},
     \end{equation*}
    we have, considering the fact that $ \hat{v}_{+} $ lies on the plane spanned by $ (\hat{v}_{-}, \hat{h}) $,
		\begin{equation*}
			\left.
			\begin{aligned}
				\hat{v}_{+} &= (\hat{v}_{+}\cdot \hat{v}_{-}) \hat{v}_{-} + (\hat{v}_{+}\cdot \hat{h}) \hat{h}\\
				\cos\mu &= \omega \cdot \hat{v}_{+}\\
				\omega &= \frac{a_{+}\sigma + a_{-}\hat{v}_{-}}{\lambda}
			\end{aligned}
			\right\} \Longrightarrow 
			\begin{aligned}
				\cos\mu = \cos\beta \frac{a_{+}\cos\theta + a_{-}}{\lambda} + \sin\beta \frac{a_{+}\sigma\cdot\hat{h}}{\lambda},
			\end{aligned} 
		\end{equation*}
        where $ \beta $ is the angle between $ \hat{v}_{+} $ and $ \hat{v}_{-} $. By further noticing \eqref{Lo1}, we obtain the expression of $ \cos\mu $ that
		\begin{equation}\label{cosmu}
			\begin{split}
				\cos\mu &= \cos\beta\cos\chi + \sin\beta\frac{a_{+}\sin\theta\sin\phi}{\lambda}\\
				&=\cos\chi \left( \cos\beta + \frac{a_{+}\sin\beta \sin\theta }{a_{+}\cos\theta + a_{-}} \sin\phi \right),
			\end{split}
		\end{equation}
		to which we take the derivative with respect to $ \phi $ such that
        \begin{equation}\label{sinmu}
            -\sin\mu \frac{\partial \mu}{\partial \phi} = \frac{a_{+}\sin\beta \sin\theta \cos\chi}{a_{+}\cos\theta + a_{-}} \cos\phi.
        \end{equation}
		Therefore, we have
		\begin{equation}\label{partial1}
			\begin{split}
				\frac{\partial \phi}{\partial \mu} &= - \frac{(a_{+}\cos\theta + a_{-})\sin\mu}{a_{+}\sin\beta \sin\theta \cos\chi \cos\phi}\\
				&= -\frac{\lambda\sin\mu}{a_{+}\sin\beta\sin\theta\cos\phi} \\
				&= -\frac{\sin\mu \tan\phi}{\cos\mu - \cos\beta\cos\chi},
			\end{split}
		\end{equation} by \eqref{Lo1} and \eqref{cosmu}. 
		To obtain $ \tan \phi $, we recall \eqref{cosmu} and find that
		\begin{equation*}
			\begin{split}
				\sin\phi &=\frac{\lambda(\cos\mu-\cos\beta\cos\chi)}{a_{+}\sin\beta\sin\theta}\\
				&=\frac{\lambda(\cos\mu-\cos\beta\cos\chi)}{\sin\beta\sqrt{a_{+}^{2}-(\lambda\cos\chi-a_{-})^{2}}},
			\end{split}
		\end{equation*}
		from which, we derive that
		\begin{equation}\label{sinphi}
			\tan\phi = \pm\frac{\sin\phi}{\sqrt{1-\sin^{2}\phi}} = \pm\frac{\lambda(\cos\mu-\cos\beta\cos\chi)}{\sqrt{\sin^{2}\beta[a_{+}^{2}-(\lambda\cos\chi-a_{-})^{2}]-\lambda^{2}(\cos\mu-\cos\beta\cos\chi)^{2}}},
		\end{equation} where we use $+$ if $\cos\phi> 0$ and use $-$ if $\cos\phi<0$.
		Combing \eqref{partial1} and \eqref{sinphi}, we finally obtain
		\begin{equation}\label{phimu}
			\frac{\partial \phi}{\partial \mu} = \mp \frac{\lambda \sin\mu}{\sqrt{\sin^{2}\beta[a_{+}^{2}-(\lambda\cos\chi-a_{-})^{2}]-\lambda^{2}(\cos\mu-\cos\beta\cos\chi)^{2}}}.
		\end{equation}
		Then, in order to have a better-reduced representation, we apply another change of variable
		\begin{equation}\label{etamu}
			\eta = \cos\mu - \cos\beta \cos\chi,
		\end{equation}
		such that $ \rd \eta = -\sin\mu\rd \mu $. Then from \eqref{cosmu}, we have
		\begin{equation}\notag
			\sin\phi = \frac{\lambda}{\sin\beta\sqrt{a_{+}^{2}-(\lambda\cos\chi-a_{-})^{2}}} \eta,
		\end{equation}
		which implies the following correspondence:
		\begin{equation}\notag
			\left\{ 
			\begin{aligned}
				\phi &=\frac{\pi}{2} &\Longleftrightarrow& \ \eta = \eta_{0}, \\
				\phi &= \pi &\Longleftrightarrow&\ \eta = 0,\\
				\phi &=\frac{3\pi}{2} &\Longleftrightarrow& \ \eta = -\eta_{0}, \\
			\end{aligned}
			\right.
		\end{equation}
		where 
		\begin{equation}\label{eta0}
			\eta_{0} = \frac{\sin\beta\sqrt{a_{+}^{2}-(\lambda\cos\chi-a_{-})^{2}}}{\lambda}.
		\end{equation}
	   Then \eqref{phimu} is reduced into a much simpler form of
		\begin{equation}\label{phimu1}
			\frac{\partial \phi}{\partial \mu} = \mp \frac{\sin\mu}{\sqrt{\eta_{0}^{2} -\eta^{2}}}.
		\end{equation}
		
		Finally, we have  
		\begin{multline*}
			\int_{\Sd^{2}} b_{n}(\hat{v}_{-}\cdot\sigma) \,\rd \sigma  = 2\int_{\frac{\pi}{2}}^{\frac{3\pi}{2}} \int_{0}^{1} b_{n}(A)  \,\rd A \,\rd \phi \\= 2 \int_{\frac{a_{-}}{\sqrt{a_{+}^{2} + a_{-}^{2}}}}^{1} b_{n}(B) \Big| \frac{\rd A}{\rd B} \Big| \int_{-\eta_{0}}^{\eta_{0}}  \Big| \frac{\rd \phi}{\rd \mu} \Big| \Big| \frac{\rd \mu}{\rd \eta} \Big|  \,\rd \eta \,\rd B.
		\end{multline*}
	   We put this into the estimate for $ K_{n}(v,v_{*}) $ and obtain
		\begin{equation}\label{Kn}
			\begin{split}
				K_{n}(v,v_{*})&=\int_{\Sd^{2}} b_{n} (\hat{v}_{-}\cdot\sigma) \left[ \psi(|v'|^{2}) + \psi(|v_{*}'|^{2}) -\psi(|v|^{2}) - \psi(|v_{*}|^{2}) \right] \,\rd \sigma \\[2pt]
				&= \int_{0}^{2\pi} \int_{0}^{\frac{\pi}{2}} b_{n}(\cos\theta) \sin\theta  \left[ \psi(|v'|^{2}) + \psi(|v_{*}'|^{2}) -\psi(|v|^{2}) - \psi(|v_{*}|^{2}) \right] \,\rd \theta \,\rd \phi\\[2pt]
				&= 2 \int_{0}^{1} b_{n}(A) \int_{\frac{\pi}{2}}^{\frac{3\pi}{2}} \left[ \psi(|v'|^{2}) + \psi(|v_{*}'|^{2}) -\psi(|v|^{2}) - \psi(|v_{*}|^{2}) \right] \,\rd \phi \,\rd A \\[2pt]
				&= 2 \int_{\frac{a_{-}}{\sqrt{a_{+}^{2} + a_{-}^{2}}}}^{1} b_{n}(B) \\&\times  \Big| \frac{\rd A}{\rd B} \Big| \int_{-\eta_{0}}^{\eta_{0}} \left[ \psi(|v'|^{2}) + \psi(|v_{*}'|^{2}) -\psi(|v|^{2}) - \psi(|v_{*}|^{2}) \right] \Big| \frac{\rd \phi}{\rd \mu} \Big| \Big| \frac{\rd \mu}{\rd \eta} \Big|  \,\rd \eta \,\rd B,
			\end{split}
		\end{equation} 
  where the symmetry of $\phi$ in the third equality can be noticed from the representation of post-collisional velocities \eqref{v'2}-\eqref{vs'2} and the relation between $\mu$ and $\phi$ in \eqref{cosmu}.

  Now our job is to compute each integral $\int_{-\eta_{0}}^{\eta_{0}} \psi(|u|^{2}) \Big| \frac{\rd \phi}{\rd \mu} \Big| \Big| \frac{\rd \mu}{\rd \eta} \Big|  \,\rd \eta$ for $u=v'$ and $u=v'_*$.
		Noting \eqref{etamu} and \eqref{phimu1}, we first observe that
		\begin{equation}\notag
			\begin{split}
				&\int_{-\eta_{0}}^{\eta_{0}} \psi(|v'|^{2}) \Big| \frac{\rd \phi}{\rd \mu} \Big| \Big| \frac{\rd \mu}{\rd \eta} \Big|  \,\rd \eta \\
				&= \left(\int_{0}^{\eta_{0}} + \int^{0}_{-\eta_{0}} \right) \psi(Y(\chi) + Z(\chi)(\cos\beta B+\eta)) \frac{1}{\sqrt{\eta_{0}^{2} - \eta^{2}}} \,\rd \eta\\
				&= \int_{0}^{\eta_{0}} \left[ \psi(Y(\chi) + Z(\chi)(\cos\beta B+\eta)) + \psi(Y(\chi) + Z(\chi)(\cos\beta B-\eta)) \right] \frac{1}{\sqrt{\eta_{0}^{2} - \eta^{2}}} \,\rd \eta\\
				&= \int_{0}^{\eta_{0}} \bigg[ \psi(Y(\chi) + Z(\chi)(\cos\beta B+\eta)) + \psi(Y(\chi) + Z(\chi)(\cos\beta B-\eta)) \\
				&-2\psi(Y(\chi)+Z(\chi)\cos\beta B) \bigg] \frac{1}{\sqrt{\eta_{0}^{2} - \eta^{2}}} \,\rd \eta +\pi\psi(Y(\chi)+Z(\chi)\cos\beta B),
			\end{split}
		\end{equation}where $Y$ and $Z$ are defined as in \eqref{yzdef}.
		Taking the integration by parts twice and denoting $ Y\eqdef Y(\chi),\  Z\eqdef Z(\chi) $ for the sake of simplicity, we have
		\begin{multline}\label{psiv'}
				 \int_{-\eta_{0}}^{\eta_{0}} \psi(|v'|^{2}) \Big| \frac{\rd \phi}{\rd \mu} \Big| \Big| \frac{\rd \mu}{\rd \eta} \Big|  \,\rd \eta \\
				=  - \int_{\eta=0}^{\eta=\eta_{0}} \left[ \psi(Y + Z(\cos\beta B+\eta)) + \psi(Y + Z(\cos\beta B-\eta)) -2\psi(Y+Z\cos\beta B) \right] \\\times \rd \left[ \cos^{-1}\left(\frac{\eta}{\eta_{0}}\right) \right]
				 +\pi\psi(Y+Z\cos\beta B)\\
				=  \bigg\lbrace -\cos^{-1}\left(\frac{\eta}{\eta_{0}}\right) \bigg[ \psi(Y + Z(\cos\beta B+\eta)) + \psi(Y + Z(\cos\beta B-\eta)) \\-2\psi(Y+Z\cos\beta B) \bigg] \bigg\rbrace \Big|_{\eta=0}^{\eta=\eta_{0}}\\
				 +Z\int_{0}^{\eta_{0}} \cos^{-1}\left(\frac{\eta}{\eta_{0}}\right) \left[ \psi'(Y + Z(\cos\beta B+\eta)) - \psi'(Y + Z(\cos\beta B-\eta)) \right] \,\rd \eta\\
				 +\pi\psi(Y+Z\cos\beta B)\\
				=  Z \int_{\eta=0}^{\eta=\eta_{0}} \left[ \psi'(Y + Z(\cos\beta B+\eta)) - \psi'(Y + Z(\cos\beta B-\eta)) \right] \\\times \rd \left[ \eta\cos^{-1}\left(\frac{\eta}{\eta_{0}}\right) - \sqrt{\eta_{0}^{2}-\eta^{2}} \right]
				 +\pi\psi(Y+Z\cos\beta B)\\
				=  \bigg\lbrace Z \left[ \eta\cos^{-1}\left(\frac{\eta}{\eta_{0}}\right) - \sqrt{\eta_{0}^{2}-\eta^{2}} \right] \\\times \left[ \psi'(Y + Z(\cos\beta B+\eta)) - \psi'(Y + Z(\cos\beta B-\eta)) \right] \bigg\rbrace \Big|_{\eta=0}^{\eta=\eta_{0}}\\
				  - Z^{2}\int_{0}^{\eta_{0}} \left[ \eta\cos^{-1}\left(\frac{\eta}{\eta_{0}}\right) - \sqrt{\eta_{0}^{2}-\eta^{2}} \right]\\\times \left[ \psi''(Y + Z(\cos\beta B+\eta)) + \psi''(Y + Z(\cos\beta B-\eta)) \right] \,\rd \eta \\
				 +\pi\psi(Y+Z\cos\beta B)\\
				=  Z^{2}\int_{0}^{\eta_{0}} \left[ \sqrt{\eta_{0}^{2}-\eta^{2}} -\eta\cos^{-1}\left(\frac{\eta}{\eta_{0}}\right) \right]\\\times  \left[ \psi''(Y + Z(\cos\beta B+\eta)) + \psi''(Y + Z(\cos\beta B-\eta)) \right] \,\rd \eta\\
				 +\pi\psi(Y+Z\cos\beta B).
			\end{multline}
		Similarly, for $ \psi(|v'_{*}|^{2}) $, we have
		\begin{multline}\label{psivstar'}
				\int_{-\eta_{0}}^{\eta_{0}} \psi(|v'_{*}|^{2}) \Big| \frac{\rd \phi}{\rd \mu} \Big| \Big| \frac{\rd \mu}{\rd \eta} \Big|  \,\rd \eta \\
            =\pi\psi(Y-Z\cos\beta B) + Z^{2}\int_{0}^{\eta_{0}} \left[ \sqrt{\eta_{0}^{2}-\eta^{2}} -\eta\cos^{-1}\left(\frac{\eta}{\eta_{0}}\right) \right] \\\times \left[ \psi''(Y - Z(\cos\beta B+\eta)) + \psi''(Y - Z(\cos\beta B-\eta)) \right] \,\rd \eta.
		\end{multline}

		Thanks to the identities \eqref{psiv'} and \eqref{psivstar'} above, $ K_{n}(v,v_{*}) $ of \eqref{Kn} can now be divided into two parts:
		\begin{equation*}
			K_{n}(v,v_{*}) = H_{n}(v,v_{*}) + G_{n}(v,v_{*}),
		\end{equation*}
            where
              \begin{multline*}
		G_{n}(v,v_*) \eqdef 2 \int_{\frac{a_{-}}{\sqrt{a_{+}^{2} + a_{-}^{2}}}}^{1} b_{n}(B)  \Big| \frac{\rd A}{\rd B}  \Big|\\\times 
  Z^{2}\int_{0}^{\eta_{0}} \left[ \sqrt{\eta_{0}^{2}-\eta^{2}} -\eta\cos^{-1}\left(\frac{\eta}{\eta_{0}}\right) \right] \Big[ \psi''(Y + Z(\cos\beta B+\eta)) \\+ \psi''(Y + Z(\cos\beta B-\eta)) + \psi''(Y - Z(\cos\beta B+\eta)) + \psi''(Y - Z(\cos\beta B-\eta)) \Big] \,\rd \eta\,\rd B,
	\end{multline*}and 
            \begin{equation*}
			H_{n}(v,v_{*}) = H_n^1 - H_n^2,
		\end{equation*}
  with
            \begin{equation*}
			H_n^1 \eqdef 2 \pi \int_{\frac{a_{-}}{\sqrt{a_{+}^{2} + a_{-}^{2}}}}^{1} b_{n}(B)    \Big[ \psi(|v'|^2 + |v'_*|^2) - \psi(|v|^2) - \psi(|v_*|^2)   \Big]   \Big| \frac{\rd A}{\rd B} \Big| \,\rd B,
		\end{equation*}
            and
		\begin{multline*}
			H_{n}^{2} \eqdef 2 \pi \int_{\frac{a_{-}}{\sqrt{a_{+}^{2} + a_{-}^{2}}}}^{1} b_{n}(B)   \Big[ \psi(|v'|^2 + |v'_*|^2) \\- \psi(Y+Z\cos\beta B) - \psi(Y-Z\cos\beta B) \Big]
             \Big| \frac{\rd A}{\rd B} \Big|  \,\rd B.
		\end{multline*}

    As for $H_n^1$, by transforming back in the variable of $\theta$ and using the similar argument in the elastic case \cite[pp. 891]{morimoto2016measure}, there exist $[\theta_1, \theta_2] \subset (0,\frac{\pi}{2})$ and $ c_1 > 0 $ independent of $n$ such that $b_n(\cos\theta)\sin\theta \geq c_1$ on $[\theta_1, \theta_2]$, it follows that, there exists a constant $C_{1,1}$ independent of $n$ such that,
    \begin{equation}\label{-Hn1}
    \begin{split}
        -H_{n}^{1} \geq &  \ 2\pi c_1(\theta_2 - \theta_1) \Big[  \psi(|v|^2) + \psi(|v_*|^2) - \psi(|v|^2 + |v_*|^2)\\
        &\qquad \qquad \qquad \qquad\qquad \qquad + \psi(|v|^2 + |v_*|^2) -  \psi(|v'|^2 + |v'_*|^2)  \Big] \\
        \geq & -C_{1,1} \Big[ |v|^2 \psi'(|v_*|^2) + |v_*|^2 \psi'(|v|^2) \Big],
    \end{split}
    \end{equation}
    where, in the last inequality, since both of the functions $\psi_1$ and $\psi_2$ in \eqref{psionetwo} satisfy the conditions (3.4) - (3.7) listed in \cite[pp. 513]{GambaDiffusively} \footnote{For the purpose of completeness, the conditions and lemma will be included in the appendix}, we can apply the inequalities in \cite[Lemma 3.1, (3.8)]{GambaDiffusively} for the term $\psi(|v|^2) + \psi(|v_*|^2) - \psi(|v|^2 + |v_*|^2)$, and the monotonicity of $\psi$ and $|v|^2 + |v_*|^2 \geq |v'|^2 + |v'_*|^2$ are utilized for $\psi(|v|^2 + |v_*|^2) -  \psi(|v'|^2 + |v'_*|^2)$.
    Hence, \eqref{-Hn1} further implies that 
    \begin{equation}\label{Hn1}
    \begin{split}
        H_{n}^{1} \leq&\  C_{1,1} \Big[ |v|^2 \psi'(|v_*|^2) + |v_*|^2 \psi'(|v|^2) \Big] \\
        \leq & \  C_{1,1}\left(1+\frac{\kappa}{2}\right) \Big[ |v|^2 \left\langle v_{*}\right\rangle^{\kappa} + |v_*|^2 \left\langle v\right\rangle^{\kappa} \Big],
    \end{split}
    \end{equation}
    where $\psi'$ can be taken as $\psi'_1$ or $\psi'_2$.

   As for $H_n^2$, by noting $|v'|^2 + |v'_*|^2 = 2Y$ from \eqref{v'2} - \eqref{vs'2} and using \eqref{A3} for the lower bound of the term $\psi(|v'|^2 + |v'_*|^2) - \psi(Y+Z\cos\beta B) - \psi(Y-Z\cos\beta B)$, we have, for some $c_2>0$
    \begin{equation}\notag
    \begin{split}
        &\psi(|v'|^2 + |v'_*|^2) - \psi(Y+Z\cos\beta B) - \psi(Y-Z\cos\beta B) \\ 
        = &\ \psi(2Y) - \psi(Y+Z\cos\beta B) - \psi(Y-Z\cos\beta B) \\
        \geq &\ c_2 (Y+Z\cos\beta B)(Y-Z\cos\beta B) \psi''(2Y)\\
        \geq & \ c_2 (Y^2 - Z^2) \psi''(2Y),
    \end{split}
    \end{equation} because $B=\cos\chi$ and $|\cos\beta B|\le 1.$
    Furthermore, recalling \eqref{yzdef}, we have
    \begin{equation}\label{pp1}
    \begin{split}
        Y^2 - Z^2 =&\ \left(\frac{|v_{+}|^{2} + \lambda^{2}|v_{-}|^{2}}{4}\right)^{2}- \left( \frac{\lambda|v_{+}||v_{-}|}{2}\right)^2\\[3pt]
        = &\ \frac{\left( |v_{+}|^{2} - \lambda^{2}|v_{-}|^{2} \right)^2}{16} \\[3pt]
        = &\ \frac{\left[ (1-\lambda^2)(|v|^{2}+|v_*|^{2}) +2(1+\lambda^2)v\cdot v_* \right]^2}{16} \\
       \geq & \ \frac{1}{16}[(1-\lambda^2)(|v|^2+|v_*|^2)]^2\\
        & \ -\frac{1}{8} (1-\lambda^2)(1+\lambda^2) (|v|^{2}+|v_*|^{2}) |v||v_*|.
    \end{split}    
    \end{equation}
    By using $ \psi(x) = \psi_{1}(x)$ as in \eqref{psionetwo} and noting $ v_{+} = v + v_{*} $ and $ v_{-} = v - v_{*} $, we observe, for some positive constant $C_{\kappa,\lambda}$
    \begin{equation}\label{pp2}
    \begin{split}
        \psi''(2Y) = \left(1 + \frac{\kappa}{2}\right)\frac{\kappa}{2}(2Y)^{\frac{\kappa}{2}-1} &=  \ \left(1 + \frac{\kappa}{2}\right)\frac{\kappa}{2}\left(\frac{|v_{+}|^{2} + \lambda^{2}|v_{-}|^{2}}{2}\right)^{\frac{\kappa}{2}-1} \\[3pt]
       &= C_{\kappa,\lambda}    (|v|^{\kappa -2} + ... + |v_*|^{\kappa -2}),
    \end{split}
    \end{equation} 
    where $\psi(x) = \psi_{2}(x)$ can be substituted for a similar estimate.
    Hence, combining \eqref{pp1} and \eqref{pp2}, we have
    \begin{multline*}
           (Y^2 - Z^2) \psi''(2Y)\ge c_{2,1} (|v|^{\kappa+2}|v_*|^2+|v_*|^{\kappa+2}|v|^2) \\- c_{2,2} (|v|^{\kappa+1}|v_*|+|v_*|^{\kappa+1}|v|),
    \end{multline*}
     where $c_{2,1}$ and $c_{2,2}$ are some positive constants that depends only on $\lambda$ and $\kappa$. Finally, following the similar argument as in \eqref{-Hn1}, we obtain 
      \begin{equation}\label{Hn2}
        H_{n}^{2}
        \geq  C_{2,1}(|v|^{\kappa+2}|v_*|^2+|v_*|^{\kappa+2}|v|^2) - C_{2,2}  (|v|^{\kappa+1}|v_*|+|v_*|^{\kappa+1}|v|),
    \end{equation}
    for some positive constants $C_{2,1}$ and $C_{2,2}$ that  are independent of $n$.

    Together with \eqref{Hn1} and \eqref{Hn2}, we conclude that
\begin{multline}\label{Hn}
        H_{n} \le  C_{1,1} \left(1+\frac{\kappa}{2}\right) \Big[ |v|^2 \left\langle v_{*}\right\rangle^{\kappa} + |v_*|^2 \left\langle v\right\rangle^{\kappa} \Big]\\[3pt] 
        - C_{2,1}(|v|^{\kappa+2}|v_*|^2+|v_*|^{\kappa+2}|v|^2) + C_{2,2}  (|v|^{\kappa+1}|v_*|+|v_*|^{\kappa+1}|v|)\\[3pt]
        \le - C_1(|v|^{\kappa+2}|v_*|^2+|v_*|^{\kappa+2}|v|^2)  +C_2 (\langle v\rangle ^{\kappa+1}|v_*|+\langle v_*\rangle^{\kappa+1}|v|),
    \end{multline}
    for some positive constants $C_1$ and $C_2$ that are independent of $n$.

    Regarding the estimate of $G_n$, for $ \psi(x) = \psi_{1}(x)$ as in \eqref{psionetwo}, by noting that $ Z \leq Y $, we further have 
	\begin{multline*}
				  Z^{2}\int_{0}^{\eta_{0}} \left[ \sqrt{\eta_{0}^{2}-\eta^{2}} -\eta\cos^{-1}\left(\frac{\eta}{\eta_{0}}\right) \right] \\
      \times \left[ \psi''(Y + Z(\cos\beta B+\eta)) + \psi''(Y + Z(\cos\beta B-\eta)) \right] \,\rd \eta\\[2pt]
				 \leq  Z^{2} \eta^{2}_{0} \left( 1+2Y \right)^{\frac{\kappa}{2}-1} \frac{\kappa}{2}\left(1+\frac{\kappa}{2}\right) \int_{\eta=0}^{\eta=\eta_{0}} \rd \left(\frac{\eta}{\eta_{0}}\right)\left[ \sqrt{1 - \left(\frac{\eta}{\eta_{0}}\right)^{2}} -\left(\frac{\eta}{\eta_{0}}\right)\cos^{-1}\left(\frac{\eta}{\eta_{0}}\right) \right] \\
				 \qquad \times \left[ \left(  \frac{Y+Z\cos\beta B}{1+2Y} + \frac{Z}{1+2Y} \eta  \right)^{\frac{\kappa}{2}-1} + \left(  \frac{Y+Z\cos\beta B}{1+2Y} - \frac{Z}{1+2Y} \eta  \right)^{\frac{\kappa}{2}-1} \right] \\[2pt]
				 \leq  Z^{2} \eta^{2}_{0} \left( 1+2Y \right)^{\frac{\kappa}{2}-1} \frac{\kappa}{2}\left(1+\frac{\kappa}{2}\right) 
                \int_{0}^{1} \left[ \sqrt{1-\tilde{\eta}^{2}} -\tilde{\eta}\cos^{-1}\tilde{\eta} \right]\qquad \qquad \qquad \qquad \qquad  \\
                 \qquad \qquad \qquad  \qquad \qquad  \qquad \qquad \qquad \qquad \times \left[(1+\eta_{0}\tilde{\eta})^{\frac{\kappa}{2}-1} + (1-\eta_{0}\tilde{\eta})^{\frac{\kappa}{2}-1} \right] \,\rd \tilde{\eta}\\[2pt]
				\lesssim\  \bigg\{ 
				\begin{aligned}
					 Z^{2} \eta^{2}_{0}, \qquad \qquad \qquad \qquad\text{if}\ \kappa < 2; \\
					 Z^{2} \eta^{2}_{0}\left( 1+2Y \right)^{\frac{\kappa}{2}-1}, \qquad \text{if}\ \kappa \geq 2.
				\end{aligned} \qquad \qquad \qquad \qquad \qquad \qquad \qquad \qquad \qquad 
			\end{multline*}
        where the integrable property is used in the last inequality, and it is noted that a similar derivation also works for $\psi = \psi_2(x)$ as in \eqref{psionetwo} by substituting the specific form of $\psi_2$.
   
		By noticing that $ Z = \left(\lambda|v_{+}||v_{-}|\right)/2 $ and that $ \eta_{0} $ is defined as in \eqref{eta0}, we observe that
		\begin{equation*}
			\begin{split}
				Z^{2} \eta_{0}^{2} &= \left(\frac{|v_{+}||v_{-}|\sin\beta}{2}\right)^{2} \left[ a_{+}^{2} -(\lambda\cos\chi-a_{-})^{2} \right]\\
				&= \left(\frac{|v_{+} \times v_{-}|}{2}\right)^{2} \left[ a_{+}^{2} - a_{+}^{2}\cos^{2}\theta \right]\\
				&= a_{+}^{2} |v_{*} \times v |^{2} \sin^{2}\theta\\
				&\le a_{+}^{2} |v|^{2} |v_{*} |^{2} \sin^{2}\theta,
			\end{split}
		\end{equation*}
		where in the second equality, we notice that $ \beta $ is the angle between $ v_{+} $ and $ v_{-} $ and also we used the relationship \eqref{Lo1}. 
		Consequently, we have the following estimate for $ G_{n}(v,v_{*}) $ that,
		\begin{equation*}
			G_{n}(v,v_{*})\left\{ 
			\begin{aligned}
				&\le \ C_{0,3} a_{+}^{2} |v|^{2} |v_{*}|^{2} \int_{0}^{\frac{\pi}{2}} b_{n}(\cos\theta) \sin^{3}\theta \,\rd\theta \leq C_{3} |v|^{2} |v_{*} |^{2}, \ \text{if}\ \kappa < 2; \\
				&\le \ C_{0,4}a_{+}^{2} |v|^{2} |v_{*}|^{2} \left( 1+|v|^{2}+|v_{*}|^{2} \right)^{\frac{\kappa}{2}-1} \int_{0}^{\frac{\pi}{2}}  b_{n}(\cos\theta) \sin^{3}\theta \,\rd\theta \\
        & \qquad\qquad\qquad\qquad \qquad \leq C_{4} \left( |v_{*}|^{2} \left\langle v\right\rangle^{\kappa} + |v|^{2} \left\langle v_{*}\right\rangle^{\kappa} \right), \ \text{if}\ \kappa \geq 2;
			\end{aligned}
			\right.
		\end{equation*}
		where $C_{0,3},\ C_{3},\ C_{0,4},\ C_{4}$ are constants that are independent of $ n $. This completes the proof of the refined Povzner inequality.
	\end{proof}    
 
        We are now equipped with a refined version of the Povzner-type inequality, which can cover the non-cutoff regime with inelastic hard potential interaction up to $\kappa>0.$ Using this, we will provide the well-posedness theory and the moment-creation property in the next sections.

	\section{Existence of the measure-valued solution}
	\label{sec:existence}
	
	In this section, we establish the existence theory of the measure-valued solution to the Cauchy problem \eqref{IB}-\eqref{F0} using the Fourier transform. The main idea is to first prove the well-posedness for the ``cutoff'' model by a fixed point theorem, by means of which, we can further construct a sequence of the approximated solution to the ``non-cutoff'' equation such that the existence of the ``non-cutoff'' solution can be guaranteed by a compactness argument.  
	
	\subsection{Well-posedness theory for the cutoff model}
	\label{sub:noncutoff}
	
	The cutoff model reads
	\begin{equation}\label{IBEcut}
		\partial_{t} F_t(v) = Q_{e}^{n}(F_t,F_t)(v),
	\end{equation}
	with the initial condition defined as a non-negative probability measure $ F_{0}(v) $, where $ Q_{e}^{n} $ is obtained by replacing the collision kernel $ B(|v-v_{*}|,\sigma) = b(\hat{v}_-\cdot\sigma)\Phi(|v-v_{*}|) $ of \eqref{Bb} by its ``cutoff'' counterpart. More precisely, the angular part $ b(\hat{v}_-\cdot\sigma) $ is being replaced by $ b_n(\hat{v}_-\cdot\sigma) $ as follows:
	\begin{equation}\label{bn}
		b_n(\hat{v}_-\cdot\sigma) \eqdef \min \left\lbrace b\left(\hat{v}_- \cdot \sigma\right), n \right\rbrace \leq b\left(\hat{v}_- \cdot \sigma\right), \quad n\in\mathbb{N},
	\end{equation}
	and the kinetic part $ \Phi(|v-v_{*}|) $ is replaced by $ \Phi_{n}(|v-v_{*}|) $, which is defined as 
	\begin{equation}\label{Pn}
		\Phi_{n}(|v-v_{*}|) \eqdef \Phi(|v-v_{*}|) \phi_{n}(|v-v_{*}|) \eqdef  |v-v_{*}|^{\gamma}\phi_{c}\left(\left|\frac{v-v_{*}}{n}\right|\right),
	\end{equation}
	where $ \phi_{c}\left(|x|\right) \in C_{c}^{\infty}\left(\mathbb{R}^{3}\right) $ satisfying that
	\begin{itemize}
		\item $ \phi_{c} $ is supported on $ \left\lbrace x: |x| \leq 2 \right\rbrace $ with $ |\phi_{c}| \leq 1 $, and
		\item $ \phi_{c} = 1 $ on $ \left\lbrace x: |x| \leq 1 \right\rbrace $,
	\end{itemize}
	such that both $ b_n $ and $ \Phi_{n} $ will approach to the non-cutoff kernel $ b $ and $ \Phi $ as $ n \rightarrow \infty $.
    Throughout the paper, we denote the Fourier(-in-$v$) transform of $\Phi_n$ as $\hat{\Phi}_n$, which is defined as
    $$\hat{\Phi}_n(\zeta)=\int_{\mathbb{R}^3}\Phi_n(|v-v_*|)e^{-iv\cdot \zeta}\,\rd v.$$ 
    Then regarding the Fourier transform, we have the following preliminary lemma:
    
    \begin{lemma}[Lemma 2.5 of \cite{morimoto2016measure}]\label{lemma2.5}
    Let $\Phi_n$ be in \eqref{Pn}. For the hard potential case $\gamma>0,$ we have
    $$\forall k\ge 0,\ k\in\mathbb{N}, \ |\partial_\zeta^k\hat{\Phi}_n(\zeta)|\lesssim_n \frac{1}{\langle\zeta\rangle^{3+\gamma+k}}.$$
    \end{lemma}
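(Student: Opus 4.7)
The plan is to combine a direct near-origin estimate with an integration-by-parts argument away from origin, exploiting the Laplacian--Fourier identity $e^{-iv\cdot\zeta}=-|\zeta|^{-2}\Delta_v e^{-iv\cdot\zeta}$ to extract powers of $|\zeta|^{-1}$ from the oscillation of the Fourier phase. The first step is to use the Fourier derivative rule: for a multi-index $\alpha$ of length $k$, one has $\partial_\zeta^\alpha\hat\Phi_n(\zeta)=\hat F_\alpha(\zeta)$ where $F_\alpha(v):=(-iv)^\alpha|v|^\gamma\phi_c(|v|/n)$. The low-frequency regime $|\zeta|\le 1$ is trivial because $\Phi_n$ is compactly supported in $\{|v|\le 2n\}$ and $|v|^{k+\gamma}$ is locally integrable in $\bR^3$, so $|\hat F_\alpha(\zeta)|$ is bounded by a constant depending on $n$ and $k$, which matches $\langle\zeta\rangle^{-3-\gamma-k}$ on this range.

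For the main high-frequency regime $|\zeta|=R>1$, I would introduce a smooth cutoff $\chi\in C^\infty(\bR)$ with $\chi\equiv 1$ on $[-1,1]$ and $\chi\equiv 0$ outside $[-2,2]$, and split at the scale $|v|\simeq R^{-1}$ as $\hat F_\alpha(\zeta)=I_1+I_2$, with $I_1$ localized to $\{|v|\le 2/R\}$ by $\chi(|v|R)$ and $I_2$ localized away from the origin by $1-\chi(|v|R)$. On $I_1$, the crude bound $|F_\alpha(v)|\lesssim|v|^{k+\gamma}$ and a radial integration give $|I_1|\lesssim\int_0^{2/R}r^{k+\gamma+2}\,\rd r\simeq R^{-(3+\gamma+k)}$. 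On $I_2$, I would apply the Laplacian--Fourier identity and integrate by parts $N$ times with $N>(k+\gamma+3)/2$; no boundary contribution arises because $(1-\chi(|v|R))$ vanishes near origin and $\phi_c(|v|/n)$ vanishes for $|v|\ge 2n$, yielding
\begin{equation*}
I_2=\frac{(-1)^N}{R^{2N}}\int_{\bR^3}\Delta_v^N\bigl[F_\alpha(v)(1-\chi(|v|R))\bigr]\,e^{-iv\cdot\zeta}\,\rd v.
\end{equation*}
A Leibniz expansion of $\Delta_v^N$ reduces the estimate to three cases: all derivatives on $F_\alpha$, all on the cutoff, and mixed. In the first case, the bound $|\Delta_v^N F_\alpha(v)|\lesssim_n|v|^{k+\gamma-2N}$ on $\{|v|\ge 1/R\}$ combined with the choice of $N$ gives a radial integral bounded by $R^{2N-k-\gamma-3}$. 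In the other cases, the cutoff derivatives confine the integrand to the shell $\{1/R\le|v|\le 2/R\}$ of volume $\simeq R^{-3}$; the chain rule produces a factor of $R$ for each derivative of $\chi(|v|R)$, while each derivative on $F_\alpha$ lowers its homogeneity by one, yielding the pointwise bound $R^{2N-k-\gamma}$ on the shell, whose integral again matches $R^{2N-k-\gamma-3}$. Multiplying by the prefactor $R^{-2N}$ gives $|I_2|\lesssim_n R^{-(3+\gamma+k)}$ in every case, completing the bound.

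The main technical obstacle I expect is the verification of the homogeneity estimate $|\Delta_v^N F_\alpha(v)|\lesssim|v|^{k+\gamma-2N}$ away from origin, for an arbitrary multi-index $\alpha$ and the non-smooth factor $|v|^\gamma$. My plan for this step is to decompose the polynomial factor $v^\alpha$ into solid spherical harmonics, so that $v^\alpha|v|^\gamma$ becomes a finite sum of terms of the form $|v|^{k+\gamma}Y_\ell(v/|v|)$, and then to iterate the exact identity $\Delta(|v|^s Y_\ell)=(s-\ell)(s+\ell+1)|v|^{s-2}Y_\ell$ on each piece; the contributions of derivatives of $\phi_c(|v|/n)$ are harmless because they are supported where $|v|\simeq n$ and hence stay away from the origin. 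A cleaner alternative that bypasses this bookkeeping is to exploit the radial symmetry of $\Phi_n$ itself, representing $\hat\Phi_n(\zeta)$ as a one-dimensional Hankel-type integral in $|\zeta|$ and using the classical large-argument asymptotics of such oscillatory integrals to treat the $\zeta$-derivatives, although this route is more specialized.
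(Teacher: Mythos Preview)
The paper does not prove this lemma; it is quoted as Lemma~2.5 of \cite{morimoto2016measure} and used without proof, so there is no argument in the paper to compare against.

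Your proof is correct and is the standard one for such Fourier decay estimates: a low/high-frequency split, a dyadic localization at scale $|v|\sim|\zeta|^{-1}$, and iterated integration by parts via the identity $e^{-iv\cdot\zeta}=-|\zeta|^{-2}\Delta_v e^{-iv\cdot\zeta}$ on the piece away from the origin. The accounting in the three Leibniz cases is accurate, and the choice $2N>k+\gamma+3$ makes the radial integral converge at the inner boundary $r\sim R^{-1}$ with the right power. One small simplification: the homogeneity estimate $|\partial^\beta(v^\alpha|v|^\gamma)|\lesssim|v|^{k+\gamma-|\beta|}$ for $v\neq 0$ follows immediately from the observation that $v^\alpha|v|^\gamma$ is smooth on $\mathbb{R}^3\setminus\{0\}$ and positively homogeneous of degree $k+\gamma$ (so each derivative is homogeneous of degree one less and bounded on the unit sphere by compactness). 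Your spherical-harmonic decomposition and the identity $\Delta(|v|^sY_\ell)=(s-\ell)(s+\ell+1)|v|^{s-2}Y_\ell$ certainly work, but they are more machinery than the problem requires.
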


  	On the other hand, by applying the Fourier(-in-$v$) transform to \eqref{IBEcut}, the Fourier transform $\varphi=\varphi(t,\xi)\eqdef \mathcal{F}(F_t)(\xi)$ of $F_t(v)$ satisfies the so-called ``Bobylev identity" of inelastic version, see \cite{alexandre2000entropy, Bobylev1975, Bobylev1988, Qi21_soft}, 
	\begin{multline}\label{IBE}
		\partial_{t} \varphi(t, \xi) \\= \int_{\Sd^{2}} b_n\left(\frac{\xi\cdot\sigma}{|\xi|}\right) \int_{\bR^{3}} \hat{\Phi}_{n}(\zeta) \left[ \varphi(t, \xie^{+}-\zeta)\varphi(t, \xie^{-}+\zeta) - \varphi(t, \zeta)\varphi(t, \xi-\zeta) \right] \rd\zeta \rd\sigma,
	\end{multline}
	where
	\begin{equation}\label{xie+}
		\xie^{+} = \left( \frac{1}{2} + \frac{\am}{2} \right)\xi + \frac{\ap}{2}|\xi|\sigma,
	\end{equation}
	\begin{equation}\label{xie-}
		\xie^{-} = \left( \frac{1}{2} - \frac{\am}{2} \right)\xi - \frac{\ap}{2}|\xi|\sigma.
	\end{equation}
	By reformulating \eqref{IBE}, we have
	\begin{equation}\label{phiequ}
		\partial_{t} \varphi(t, \xi) + S\varphi(t, \xi) = \mathcal{G}^{e}_{1} [\varphi] \left(t, \xi\right) + \mathcal{G}^{e}_{2}[\varphi] \left(t, \xi\right),
	\end{equation}
	where $ S = \sup\limits_{q\in\mathbb{R}^{3}}\left|\Phi_{n}\left(\left|q\right| \right) \right| $ and 
	\begin{align}
		\mathcal{G}^{e}_{1} [\varphi] \left(t, \xi\right)  &=  \int_{\Sd^{2}} b_n \left(\frac{\xi\cdot\sigma}{|\xi|}\right) \int_{\bR^{3}} \hat{\Phi}_{n}(\zeta) \left[ \varphi(t, \xie^{+}-\zeta)\varphi(t, \xie^{-}+\zeta)\right]\,\rd\zeta \,\rd\sigma, \label{G1}\\
		\mathcal{G}^{e}_{2} [\varphi] \left(t, \xi\right)  &= S\varphi(t, \xi)-\int_{\Sd^{2}} b_n \left(\frac{\xi\cdot\sigma}{|\xi|}\right) \int_{\bR^{3}} \hat{\Phi}_{n}(\zeta) \left[\varphi(t, \zeta)\varphi(t, \xi-\zeta) \right]\,\rd\zeta \,\rd\sigma. \label{G2}
	\end{align} 
Then	we can further obtain the following integral form of the solution $ \varphi(t, \xi) $,
	\begin{equation}\label{phi_operator}
		\varphi(t, \xi) = \e^{-St}\varphi_{0}(\xi) + \int_{0}^{t} \e^{-S(t-\tau)} \Big( \mathcal{G}^{e}_{1}[\varphi] \left(\tau, \xi\right) + \mathcal{G}^{e}_{2}[\varphi] \left(\tau, \xi\right)  \Big) \,\rd\tau,
	\end{equation}
	where $ \varphi_{0} $ is the Fourier transform of the initial condition $ F_{0} $ in \eqref{F0}.
	
	The proof of the well-posedness of the cutoff equation with the hard potential above can directly follow the counterpart of the soft-potential case \cite[Theorem 3.1]{Qi21_soft}, as long as one can prove the following compactness lemma (Lemma \ref{GG}) of $ \mathcal{G}^{e}_{1} $ and $ \mathcal{G}^{e}_{2} $ in the case of hard potential. Before we introduce the lemma, we define the following space $\mathcal{K}$ and $\mathcal{K}^\alpha$ of characteristic functions:
	$$\mathcal{K}\eqdef \mathcal{F}(P_0(\mathbb{R}^3)) = \left\{\int_{\mathbb{R}^3} \e^{-\im v\cdot \zeta}\,\rd F(v); \ F\in P_0(\mathbb{R}^3) \right\},$$ and
	\begin{equation}\notag
		\K^{\alpha} = \left\lbrace \p\in\K; \ \left\| \p-1 \right\|_{\alpha}= \sup_{0 \neq \xi\in\mathbb{R}^{3}} \frac{\left| \p(\xi)-1 \right|}{|\xi|^{\alpha}} < \infty \right\rbrace.
	\end{equation}
    Note that $\mathcal{K}^\alpha$ is a subspace of the characteristic function space $ \mathcal{K} $ and is a complete metric space endowed with the $C^{0,\alpha}$-distance
	\begin{equation}\notag
		\|\p - \tilde{\p} \|_{\alpha} \eqdef \sup\limits_{0 \neq \xi\in\mathbb{R}^{3}} \frac{|\p(\xi) - \tilde{\p}(\xi)|}{|\xi|^{\alpha}}.
	\end{equation} 
 
    It follows from \cite[Lemma 3.12]{cannone2010infinite} that $\{1\}\subset\mathcal{K}^\alpha\subset \mathcal{K}^0=\mathcal{K}$ for $2\ge \alpha \ge 0,$ and $\mathcal{K}^\alpha=\{1\}$ for all $\alpha>2.$ Note that a characteristic function $\varphi$ satisfies $\varphi(0)=1$ and $|\varphi|\le 1$.
    See \cite{cannone2010infinite,morimoto2016measure,Qi21_soft} for more details about the space of characteristic function.

	\begin{lemma}\label{GG}
		For any restitution coefficient $ e\in (0,1] $ and characteristic function $ \p \in \mathcal{K} $, both of $ \mathcal{G}^{e}_{1}[\varphi] $ and $ \mathcal{G}^{e}_{2}[\varphi] $, defined by \eqref{G1} and \eqref{G2} respectively, are continuous and positive-definite. Furthermore, if $ 0<\gamma\le 2 $ and $ 0< \alpha \leq \min\{\gamma,1\} $, 
		then for any characteristic functions $ \varphi, \tilde{\p} \in \mathcal{K}^{\alpha}$, there exists a constant $ C_{e,n}>0 $ such that,
		\begin{equation}\label{GG12}
			\Big|\mathcal{G}^{e}_{1}[\varphi] + \mathcal{G}^{e}_{2}[\varphi] - \mathcal{G}^{e}_{1}[\tilde{\p}] - \mathcal{G}^{e}_{2}[\tilde{\p}] \Big| \leq \left( S+C_{e,n}\right) \left\| \varphi - \tilde{\p} \right\|_{\alpha} \left| \xi\right|^{\alpha},
		\end{equation}
		for all $ \xi\in\mathbb{R}^{3} $, where we define $C^{0,\alpha}$-distance $\|\cdot\|_\alpha$ as
	\begin{equation}\notag
		\|\p - \tilde{\p} \|_{\alpha} \eqdef \sup\limits_{0 \neq \xi \in\mathbb{R}^{3}} \frac{|\p(\xi) - \tilde{\p}(\xi)|}{|\xi|^{\alpha}}.
	\end{equation}
	\end{lemma}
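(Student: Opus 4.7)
\emph{Plan.} The plan is to handle the three assertions in order: continuity and positive-definiteness follow by recognizing $\mathcal{G}_1^e$ and $\mathcal{G}_2^e$ as Fourier transforms of non-negative measures via the Bobylev identity, while the Hölder--Lipschitz bound \eqref{GG12} is obtained through a product-rule decomposition combined with the pointwise bounds $|\varphi|,|\tilde{\p}|\le 1$, $|\Delta(\eta)|\le \|\Delta\|_\alpha |\eta|^\alpha$, and the decay estimates of Lemma~\ref{lemma2.5}.

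For continuity and positive-definiteness, set $F\eqdef \mathcal{F}^{-1}(\varphi)$, a non-negative probability measure since $\varphi\in\mathcal{K}$. By construction and the Bobylev identity, $\mathcal{G}_1^e[\varphi](\xi)$ coincides with the Fourier transform of the gain term $Q^{n,+}_e(F,F)$ of the cutoff inelastic collision operator, which is itself a non-negative measure; Bochner's theorem then yields positive-definiteness, while continuity in $\xi$ follows from dominated convergence using $|\varphi|\le 1$, the integrability $\int |\hat{\Phi}_n(\zeta)|\,\rd\zeta<\infty$ from Lemma~\ref{lemma2.5} (valid since $\gamma>0$), and the cutoff $\int_{\Sd^{2}}b_n\,\rd\sigma<\infty$. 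Analogously, $\mathcal{G}_2^e[\varphi]=\mathcal{F}(SF-Q^{n,-}_e(F,F))$, and by the choice of $S$ dominating the loss frequency, $SF-Q^{n,-}_e(F,F)$ is a non-negative measure, so the same Bochner argument applies.

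For the Lipschitz estimate, the crucial point is that $\mathcal{G}_1^e$ and $\mathcal{G}_2^e$ must be combined: individually $\mathcal{G}_i^e[\varphi](0)-\mathcal{G}_i^e[\tilde{\p}](0)\neq 0$, but the combined difference vanishes at $\xi=0$ by mass conservation. Writing $\Delta\eqdef\varphi-\tilde{\p}$, one obtains
\begin{equation*}
(\mathcal{G}_1^e+\mathcal{G}_2^e)[\varphi]-(\mathcal{G}_1^e+\mathcal{G}_2^e)[\tilde{\p}] = S\Delta(\xi)+\int_{\Sd^{2}}b_n\,\rd\sigma\int_{\bR^{3}}\hat{\Phi}_n(\zeta)\bigl[I(\varphi)-I(\tilde{\p})\bigr]\,\rd\zeta,
\end{equation*}
with $I(\varphi)\eqdef \varphi(\xie^{+}-\zeta)\varphi(\xie^{-}+\zeta)-\varphi(\zeta)\varphi(\xi-\zeta)$ satisfying $I(\varphi)|_{\xi=0}=0$. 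The $S\Delta(\xi)$ piece directly yields the $S$ prefactor of the bound. I would then apply the product-rule identity $ab-cd=\Delta(a)b+c\,\Delta(b)$ to $I(\varphi)-I(\tilde{\p})$ after pairing the four arguments $\{\xie^{+}-\zeta,\xie^{-}+\zeta,\zeta,\xi-\zeta\}$ into the natural groups $(\xie^{+}-\zeta,\xi-\zeta)$ and $(\xie^{-}+\zeta,\zeta)$, which differ by exactly $\mp\xie^{-}$ of modulus $\le C(e)|\xi|$. Using $|\Delta(\eta)|\le\|\Delta\|_\alpha|\eta|^\alpha$, the bound $|\xie^{\pm}|\le C(e)|\xi|$ from \eqref{xie+}--\eqref{xie-}, and the subadditivity $|a\pm b|^\alpha\le |a|^\alpha+|b|^\alpha$ for $\alpha\le 1$, reduces the problem to estimating $\int|\hat{\Phi}_n(\zeta)|\,\rd\zeta$ and $\int|\hat{\Phi}_n(\zeta)||\zeta|^\alpha\,\rd\zeta$, which are controlled by Lemma~\ref{lemma2.5}.

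\emph{Main obstacle.} The principal difficulty is to ensure that all residual $\zeta$-integrals carry the uniform $|\xi|^\alpha$ prefactor with a constant independent of $\xi$. Since Lemma~\ref{lemma2.5} only gives $|\hat{\Phi}_n(\zeta)|\lesssim_n \langle\zeta\rangle^{-3-\gamma}$, the moment integral $\int|\hat{\Phi}_n(\zeta)||\zeta|^\alpha\,\rd\zeta$ is absolutely convergent for $\alpha<\gamma$ but exhibits logarithmic divergence in the borderline regime $\alpha=\gamma\le 1$ permitted by the statement. To handle this, I would split the $\zeta$-domain at a radius comparable to $|\xi|$: on the inner region $\{|\zeta|\le R|\xi|\}$ the Hölder bound $|\Delta(\eta)|\le\|\Delta\|_\alpha|\eta|^\alpha$ produces contributions proportional to $|\xi|^\alpha$, whereas on the outer region the uniform bound $|\Delta|\le 2$ together with the higher-derivative decay of $\hat{\Phi}_n$ (again via Lemma~\ref{lemma2.5}) yields a convergent tail. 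Balancing the two regions and invoking the cancellation of $I$ at $\xi=0$ absorbs any residual constants into $C_{e,n}$, producing the claimed bound $(S+C_{e,n})\|\varphi-\tilde{\p}\|_\alpha|\xi|^\alpha$.
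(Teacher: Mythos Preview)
Your treatment of continuity and positive-definiteness is fine and matches the paper's. The gap is in the Lipschitz estimate \eqref{GG12}. The norm $\|\Delta\|_\alpha=\sup_{\eta\neq 0}|\Delta(\eta)|/|\eta|^\alpha$ controls \emph{values} of $\Delta$ near the origin, not \emph{increments}: you get $|\Delta(\eta)|\le\|\Delta\|_\alpha|\eta|^\alpha$ but not $|\Delta(\eta_1)-\Delta(\eta_2)|\le\|\Delta\|_\alpha|\eta_1-\eta_2|^\alpha$. Consequently, no matter how you pair the four arguments $\xie^{+}-\zeta,\ \xie^{-}+\zeta,\ \zeta,\ \xi-\zeta$ in a product-rule expansion, the resulting bound on $|I(\varphi)-I(\tilde\varphi)|$ contains terms of size $\|\Delta\|_\alpha|\zeta|^\alpha$ (or $\|\Delta\|_\alpha|\xi-\zeta|^\alpha$), and after integration against $|\hat\Phi_n(\zeta)|$ this yields a \emph{constant} in $\xi$, not the required $|\xi|^\alpha$. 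Your diagnosis that the obstacle is convergence of $\int|\hat\Phi_n(\zeta)||\zeta|^\alpha\,\rd\zeta$ at $\alpha=\gamma$ misses the point: even when $\alpha<\gamma$ and that integral converges, the $|\xi|^\alpha$ factor is simply absent. The proposed splitting at $|\zeta|\sim R|\xi|$ does not help either, since on the outer region the crude bound $|\Delta|\le 2$ gives $\int_{|\zeta|>R|\xi|}|\hat\Phi_n(\zeta)|\,\rd\zeta$, which tends to the full integral as $|\xi|\to 0$ rather than to zero; and there is no $\hat\Phi_n$-difference in your integrand to which the ``higher-derivative decay'' could be applied.

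The paper's remedy is a change of variable, not a finer bound: translate $\zeta\mapsto\zeta-\xie^{-}$ in the gain integral $\mathcal{G}^e_1$. Because $\xie^{+}+\xie^{-}=\xi$, this turns $\varphi(\xie^{+}-\zeta)\varphi(\xie^{-}+\zeta)$ into $\varphi(\xi-\zeta)\varphi(\zeta)$, the \emph{same} product that appears in $\mathcal{G}^e_2$. All $\xi$-dependence is thereby shifted into the kernel difference $\hat\Phi_n(\zeta-\xie^{-})-\hat\Phi_n(\zeta)$, and now the mean-value theorem together with the derivative bound $|\partial_\zeta\hat\Phi_n|\lesssim_n\langle\zeta\rangle^{-4-\gamma}$ of Lemma~\ref{lemma2.5} supplies a factor $|\xie^{-}|\le a_+|\xi|$. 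For $|\xi|\le 1$ this gives $|\xi|^{1-\alpha}\cdot|\xi|^\alpha\le|\xi|^\alpha$ with a $\zeta$-integral that converges since $\alpha\le\gamma<\gamma+1$; for $|\xi|>1$ one simply bounds the two $\hat\Phi_n$ terms separately. This change of variable is the missing idea in your argument.
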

	\begin{proof}
		
[\textit{Continuity and positive-definiteness}]	For the proof of the continuity and positive-definite (see \cite[Theorem 2.1]{morimoto2016measure} for the definition of positive definiteness) property of $ \mathcal{G}^{e}_{1}[\varphi] $ and $ \mathcal{G}^{e}_{2}[\varphi] $, it suffices to show that $ \mathcal{G}^{e}_{1}[\varphi] $ and $ \mathcal{G}^{e}_{2}[\varphi] $ are characteristic functions by using the Bochner's Theorem \cite[Theorem 3.3]{cannone2010infinite}. For $ \mathcal{G}^{e}_{1}[\varphi] $, we note that $\hat{\Phi}_n$ is integrable with respect to $\zeta$ in \eqref{G1}, and hence the rest of the proof follows from the construction of a mollified characteristic function $\mathcal{G}^{e,m}_{1}[\varphi]$ and the use of the Lebesgue dominated convergence theorem  as in \cite[Lemma 3.4]{Qi21_soft}, \cite[Lemma 2.6]{morimoto2016measure}, and \cite[Lemma 2.1]{PTfourier1996}.  On the other hand, $ \mathcal{G}^{e}_{2}[\varphi] $ in \eqref{G2} is defined the same as its elastic counterpart in \cite[Lemma 2.6]{morimoto2016measure}, and the same proof is applied for $0<\gamma\le 2$ and $0<\alpha\le \min\{\gamma,1\}.$ 
		\newline
		\newline
    \noindent[\textit{Proof of the estimates \eqref{GG12}}] To prove the estimate \eqref{GG12} in hard potential case ($ 0<\gamma\le 2 $), we begin with substituting $ \varphi $ and $ \tilde{\p} $ into \eqref{G1}-\eqref{G2} and taking the subtraction and obtain
		\begin{equation}\notag
			\begin{split}
				&\Big|\mathcal{G}^{e}_{1}[\varphi] + \mathcal{G}^{e}_{2}[\varphi] - \mathcal{G}^{e}_{1}[\tilde{\p}] - \mathcal{G}^{e}_{2}[\tilde{\p}] \Big| \\
				\leq & \int_{\Sd^{2}} b_{n}\left(\frac{\xi\cdot\sigma}{|\xi|}\right) \int_{\bR^{3}} \left| \hat{\Phi}_{n}(\zeta - \xie^{-}) - \hat{\Phi}_{n}(\zeta)\right|  \Big| \varphi(\zeta)\varphi(\xi-\zeta) - \tilde{\p}(\zeta)\tilde{\p}(\xi-\zeta)\Big| \,\rd\zeta \,\rd\sigma\\
				& \qquad+ S \left\| \varphi - \tilde{\p} \right\|_{\alpha} \left| \xi\right|^{\alpha},
			\end{split}
		\end{equation}
		where we utilize the change of variable $ \xie^{-} + \zeta \rightarrow \zeta $ and the fact that $ \xie^{+} + \xie^{-} = \xi $.
        Then, we observe that
		\begin{equation}\notag
			\begin{split}
				&\left| \varphi(\zeta)\varphi(\xi-\zeta) - \tilde{\p}(\zeta)\tilde{\p}(\xi-\zeta)\right|\\
				\leq & \left| \varphi(\zeta)\varphi(\xi-\zeta) - \varphi(\zeta)\tilde{\p}(\xi-\zeta) + \varphi(\zeta)\tilde{\p}(\xi-\zeta) - \tilde{\p}(\zeta)\tilde{\p}(\xi-\zeta) \right|\\
				\leq &\left| \varphi(\zeta) \right| \left\|\varphi -\tilde{\p} \right\|_{\alpha} \left|\xi-\zeta \right|^{\alpha} + \left\|\varphi -\tilde{\p} \right\|_{\alpha} \left| \zeta \right|^{\alpha} \left|\tilde{\p}(\xi-\zeta)\right|\\[2pt]
				\leq & \frac{\left|\xi-\zeta \right|^{\alpha} + \left| \zeta \right|^{\alpha}}{\left| \xi \right|^{\alpha}} \left\|\varphi -\tilde{\p} \right\|_{\alpha} \left| \xi \right|^{\alpha},
			\end{split}
		\end{equation}
		where we used the property of the characteristic functions that $ \left| \varphi(\zeta) \right| < 1 $ and $ \left|\tilde{\p}(\xi-\zeta)\right| < 1 $ for the last inequality.
		Hence, combining the analysis above, we realize that it suffices to prove the following estimate:
		\begin{equation}\label{desired est}
			\int_{\Sd^{2}} b_{n}\left(\frac{\xi\cdot\sigma}{|\xi|}\right) \int_{\bR^{3}} \left| \hat{\Phi}_{n}(\zeta - \xie^{-}) - \hat{\Phi}_{n}(\zeta)\right|  \frac{\left|\xi-\zeta \right|^{\alpha} + \left| \zeta \right|^{\alpha}}{\left| \xi \right|^{\alpha}}  \,\rd\zeta \,\rd\sigma \leq C_{e,n}.
		\end{equation}
		In fact, for $ \left| \xi \right| \leq 1 $, by considering Lemma \ref{lemma2.5} and the following estimates
		\begin{equation}\label{xie-xi}
			\frac{|\xie^{-}|}{|\xi|^{\alpha}} = \frac{a_+ \sin\frac{\bar{\theta}}{2}|\xi|}{|\xi|^{\alpha}} \leq a_+ \sin\frac{\bar{\theta}}{2},
		\end{equation}
		with $\bar{\theta}$ the angle between $\frac{\xi}{|\xi|}$ and $\sigma$, and
		\begin{equation}\label{zeta+xi}
			\begin{split}
				|\xi - \zeta|^{\alpha} + |\zeta|^{\alpha} &\lesssim \left|\zeta - \tau\xie^{-} \right|^{\alpha} + 1,\ \text{ for any } \tau\in[0,1],
			\end{split}
		\end{equation}
		we obtain that
		\begin{equation}\notag
			\begin{split}
				&\int_{\bR^{3}} \left| \hat{\Phi}_{n}(\zeta - \xie^{-}) - \hat{\Phi}_{n}(\zeta)\right|  \frac{\left|\xi-\zeta \right|^{\alpha} + \left| \zeta \right|^{\alpha}}{\left| \xi \right|^{\alpha}} \,\rd\zeta \\
				\le \ & \int_{\bR^{3}} \int_{0}^{1} \left| \frac{\partial \hat{\Phi}_{n}}{\partial\zeta}(\zeta -\tau \xie^{-}) \right| \,\rd\tau \left| \xie^{-} \right| \frac{\left|\xi-\zeta \right|^{\alpha} + \left| \zeta \right|^{\alpha}}{\left| \xi \right|^{\alpha}}  \,\rd\zeta\\
				\lesssim_n & a_+ \sin\frac{\bar{\theta}}{2} \int_{0}^{1} \int_{\bR^{3}} \frac{|\xi - \zeta|^{\alpha} + |\zeta|^{\alpha} }{\left\langle \zeta - \tau \xie^{-} \right\rangle^{3+\gamma+1} } \,\rd\zeta \,\rd\tau\\
				\lesssim_n & a_+ \sin\frac{\bar{\theta}}{2} \int_{0}^{1} \int_{\bR^{3}} \frac{\left|\zeta - \tau\xie^{-} \right|^{\alpha} + 1 }{\left\langle \zeta - \tau \xie^{-} \right\rangle^{3+\gamma+1} } \,\rd\zeta \,\rd\tau \lesssim_n a_+ \sin\frac{\bar{\theta}}{2},
			\end{split}
	\end{equation}
	where we utilize Lemma \ref{lemma2.5} and \eqref{xie-xi} in the second inequality as well as \eqref{zeta+xi} in the third inequality.
		On the other hand, for $ |\xi| > 1 $, we use Lemma \ref{lemma2.5} and obtain
		\begin{equation}\notag
			\begin{split}
				&\int_{\bR^{3}} \left| \hat{\Phi}_{n}(\zeta - \xie^{-}) - \hat{\Phi}_{n}(\zeta)\right|  \frac{\left|\xi-\zeta \right|^{\alpha} + \left| \zeta \right|^{\alpha}}{\left| \xi \right|^{\alpha}} \,\rd\zeta \\
				\leq & \int_{\bR^{3}} \left| \hat{\Phi}_{n}(\zeta - \xie^{-}) \right|  \frac{\left|\xi-\zeta \right|^{\alpha} + \left| \zeta \right|^{\alpha}}{\left| \xi \right|^{\alpha}} \,\rd\zeta + \int_{\bR^{3}} \left| \hat{\Phi}_{n}(\zeta)\right|  \frac{\left|\xi-\zeta \right|^{\alpha} + \left| \zeta \right|^{\alpha}}{\left| \xi \right|^{\alpha}} \,\rd\zeta\\
				\lesssim_n & \int_{\bR^{3}}  \frac{2\left|\zeta - \xie^{-}\right|^{\alpha} + 2}{\left\langle \zeta - \xie^{-} \right\rangle^{3+\gamma} } \,\rd\zeta + \int_{\bR^{3}}  \frac{2\left|\zeta\right|^{\alpha} + 1}{\left\langle \zeta \right\rangle^{3+\gamma} } \,\rd\zeta \lesssim_n 1.
			\end{split} 
		\end{equation}
		This completes the proof of the desired estimate \eqref{desired est} and the proof of the lemma.	
    \end{proof}
	
    Consequently, we use Lemma \ref{GG} and apply the standard Banach fixed-point theorem and continuation argument towards \eqref{phi_operator} in the space $ \mathcal{K}^{\alpha} $. After applying the inverse Fourier transform $F^n_t=\mathcal{F}^{-1}(\varphi^n)$, we obtain the following well-posedness result for the cutoff model:
    
    \begin{proposition}\label{cutoff_existence}
    Let $ e\in(0,1] $ and $ \alpha_{0}\in \left(0,2\right] $ and let the collision kernel $B$ be a cutoff kernel $ B = b_{n} \Phi_{n} $. Then, for any initial datum $ F_{0}(v) \in P_{\alpha_{0}}(\mathbb{R}^{3}) $, there exists a unique solution $ F^n_{t}(v) \in C\left( \left[0,\infty \right), P_{\alpha}(\mathbb{R}^{3}) \right) $ with $ \alpha \in (0,\alpha_{0}) $ to the cutoff model \eqref{IBEcut} in the measure-valued sense. Moreover, if $F_{0}(v) \in P_{2}(\mathbb{R}^{3})$ (resp., if $F_{0}(v) \in P_{\beta}(\mathbb{R}^{3})$ for $\beta \geq 1$), then the solution $F^n_{t}(v)$ satisfies
    \begin{equation}\label{dissFt.cut}
     \int_{\bR^{3}} |v|^{2} \,\rd F^n_{t}(v) \leq \int_{\bR^{3}} |v|^{2} \,\rd F_{0}(v)\ (resp., \int_{\bR^{3}} v_j \,\rd F^n_{t}(v) = \int_{\bR^{3}} v_j \,\rd F_{0}(v), \ j =1,2,3),
    \end{equation}
     for any $t>0$.
    \end{proposition}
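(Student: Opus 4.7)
The plan is to run a Banach fixed-point argument on the Fourier side directly on the integral formulation \eqref{phi_operator}, then extend the local solution globally and invert the transform. Fix $\alpha \in \bigl(0, \min\{\gamma, 1, \alpha_{0}\}\bigr)$ and introduce the Picard operator
\[
\mathcal{T}[\varphi](t,\xi) \eqdef \e^{-St}\varphi_{0}(\xi) + \int_{0}^{t} \e^{-S(t-\tau)}\bigl(\mathcal{G}^{e}_{1}[\varphi] + \mathcal{G}^{e}_{2}[\varphi]\bigr)(\tau,\xi)\,\rd\tau
\]
acting on the complete metric space $X_{T} \eqdef C\bigl([0,T]; \mathcal{K}^{\alpha}\bigr)$ equipped with the distance $d(\varphi,\tilde{\p}) = \sup_{t \in [0,T]} \|\varphi(t,\cdot) - \tilde{\p}(t,\cdot)\|_{\alpha}$. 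The continuity and positive-definiteness assertions of Lemma \ref{GG} guarantee that whenever $\varphi(\tau,\cdot) \in \mathcal{K}$, both $\mathcal{G}^{e}_{1}[\varphi](\tau,\cdot)$ and $\mathcal{G}^{e}_{2}[\varphi](\tau,\cdot)$ are themselves characteristic functions; combined with the fact that $\mathcal{K}$ is convex and closed under pointwise limits of continuous positive-definite functions (Bochner's theorem), this implies $\mathcal{T}[\varphi](t,\cdot) \in \mathcal{K}$. The $\alpha$-H\"older control near the origin is then transported from $\varphi_{0} \in \mathcal{K}^{\alpha_{0}} \subset \mathcal{K}^{\alpha}$ by applying \eqref{GG12} to the pair $(\varphi, 1)$, and hence $\mathcal{T}$ maps $X_{T}$ into itself.

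The contraction property is a direct consequence of \eqref{GG12}: dividing by $|\xi|^{\alpha}$ and integrating in $\tau$ yields
\[
d\bigl(\mathcal{T}[\varphi], \mathcal{T}[\tilde{\p}]\bigr) \leq \frac{S + C_{e,n}}{S}\bigl(1 - \e^{-ST}\bigr)\, d(\varphi, \tilde{\p}),
\]
which is a strict contraction once $T$ is chosen small enough, producing a unique local fixed point $\varphi^{n} \in X_{T}$ of \eqref{phi_operator}, equivalently of \eqref{phiequ}. Because the Lipschitz constant $C_{e,n}$ depends only on $e$ and $n$ but not on the solution, the local existence time is uniform; a standard continuation argument therefore extends $\varphi^{n}$ to all of $[0,\infty)$. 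Applying the inverse Fourier transform, Bochner's theorem produces the probability-measure-valued solution $F^{n}_{t} \eqdef \mathcal{F}^{-1}(\varphi^{n})$, and the membership $F^{n}_{t} \in P_{\alpha}(\mathbb{R}^{3})$ follows from the standard equivalence between the finiteness of $\|\varphi^{n}(t,\cdot) - 1\|_{\alpha}$ and the finiteness of the $\alpha$-moment of $F^{n}_{t}$ (see \cite{cannone2010infinite, morimoto2016measure}).

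For the identities \eqref{dissFt.cut}, I would return to the weak formulation \eqref{weak}, which is now rigorous for the cutoff kernel $B = b_{n}\Phi_{n}$ because $b_{n}$ is bounded on $\mathbb{S}^{2}$ and $\Phi_{n}$ is compactly supported in the relative velocity, making the inner integral in \eqref{weak} absolutely convergent. Testing against $\psi(v) = v_{j}$ yields momentum conservation via \eqref{conservQe}, while testing against $\psi(v) = |v|^{2}$ combined with the inelastic dissipation \eqref{disspationQe} produces the energy inequality. I expect the main obstacle to lie not in the contraction step itself but rather in verifying rigorously that $\mathcal{T}$ preserves the characteristic-function structure together with the $\alpha$-H\"older control at the origin: this relies on the mollification-and-dominated-convergence argument referenced in the proof of Lemma \ref{GG} (cf.\ \cite[Lemma 3.4]{Qi21_soft}, \cite[Lemma 2.6]{morimoto2016measure}), which must be executed carefully so that Bochner's theorem indeed applies to the fixed point at every time.
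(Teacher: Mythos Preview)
Your approach is essentially the one the paper takes: the authors state that the proof ``can directly follow the counterpart of the soft-potential case \cite[Theorem 3.1]{Qi21_soft}'' once Lemma \ref{GG} is in hand, and then ``use Lemma \ref{GG} and apply the standard Banach fixed-point theorem and continuation argument towards \eqref{phi_operator} in the space $\mathcal{K}^{\alpha}$,'' exactly as you outline. One small point worth tightening: your last paragraph tests \eqref{weak} directly against $\psi(v)=v_{j}$ and $\psi(v)=|v|^{2}$, but these are not in $C_{b}^{2}(\mathbb{R}^{3})$, so a truncation-and-limit step is needed; the paper handles this by deferring to the argument in \cite[Theorem 2.8]{morimoto2016measure} (see the Remark following Proposition \ref{cutoff_existence}).
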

    
    \begin{remark}
    Note that, mass conservation of the solution is naturally inherited from the definition of the measure-valued solution, since $1 = \psi \in C_b^2(\mathbb{R}^3)$; however, the momentum conservation and energy dissipation of the obtained solution, i.e., \eqref{dissFt.cut}, have to be further checked, which can be done by using the same technique as in the proof of {\rm \cite[Theorem 2.8]{morimoto2016measure}}.
    \end{remark}

        In addition, we can also obtain the following moments-estimate of the sequence of the measure-valued solutions $ F^{n}_{t}(v) $ as a direct application of the Povzner estimate that we proved in Proposition~\ref{Povzner}, if the initial datum $ F_{0}(v) $ has finite $ (2+\kappa) $-moments for any given $\kappa>0$.
        
    \begin{corollary}\label{Proposition2+kappa}
		Assume that $ F_{0} $ satisfies, for a given $ \kappa > 0 $,
		\begin{equation}\notag
			\int_{\bR^{3}} \left\langle v \right\rangle^{2+\kappa} \,\rd F_{0}(v) < \infty.
		\end{equation}
        Then, for any fixed $ T > 0 $, there exists constants $C_1$ and $ C_{\kappa,e,T} > 0 $ independent of $ n $ such that,
		\begin{multline}\label{MomentProduction_n1}
			\sup_{t \in [0,T]} \int_{\bR^{3}} \left\langle v \right\rangle^{2+\kappa} \,\rd F_{t}^{n}(v) + C_1\int_{0}^{t} \left( \int_{\bR^{3}} \left\langle v \right\rangle^{2+\kappa} \min\{ \left\langle v \right\rangle^{\gamma}, n^{\gamma} \} \,\rd F_{\tau}^{n}(v) \right) \\
     \times \left( \int_{\bR^{3}} \left\langle v_* \right\rangle^{2} \min\{ \left\langle v_* \right\rangle^{\gamma}, n^{\gamma} \} \,\rd F_{\tau}^{n}(v_*) \right) \,\rd \tau
        \leq C_{\kappa,e,T}.
		\end{multline}
	\end{corollary}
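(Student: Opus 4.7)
The plan is to test the weak formulation \eqref{weak} of the cutoff equation \eqref{IBEcut} against $\psi(v)=\langle v\rangle^{2+\kappa}$. Since $\psi$ is not in $C_b^2(\bR^3)$ one first approximates it by a bounded sequence in $C_b^2$, or equivalently writes $\psi=1+\psi_2(|v|^2)$ with $\psi_2$ as in \eqref{psionetwo}, uses mass conservation to drop the $1$, and applies Proposition~\ref{Povzner} to the inner sphere integral. This identifies
\begin{align*}
\frac{\rd}{\rd t}\int_{\bR^3}\langle v\rangle^{2+\kappa}\,\rd F_t^n(v)\;=\;\tfrac12\int_{\bR^6}\Phi_n(|v-v_*|)\,K_n^e(v,v_*)\,\rd F_t^n(v)\,\rd F_t^n(v_*),
\end{align*}
where $K_n^e$ is the kernel of Proposition~\ref{Povzner} with $b_n$ and $\psi_2$, and decomposes as $H_n+G_n$ with the improved bound \eqref{Hboundbetter} on $H_n$.

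I would then insert this decomposition and read off the coercive contribution
\begin{align*}
-C_1\int_{\bR^6}\Phi_n(|v-v_*|)\bigl[\langle v\rangle^{2+\kappa}|v_*|^2+\langle v_*\rangle^{2+\kappa}|v|^2\bigr]\,\rd F_t^n(v)\,\rd F_t^n(v_*),
\end{align*}
and transfer the velocity-difference weight $\Phi_n$ onto separate $v$- and $v_*$-weights. Since Proposition~\ref{cutoff_existence} already gives uniform-in-$n$ control of mass and energy, Chebyshev's inequality produces a radius $R=R(T)$ for which $F_\tau^n(\{|v_*|\le R\})\ge 1/2$ for all $\tau\in[0,T]$ and all $n$. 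On that set, for $|v|\ge 2R$ one has $|v-v_*|\sim\langle v\rangle$, while the definition of $\phi_c$ forces $\Phi_n(|v-v_*|)\sim\min\{\langle v\rangle^\gamma,n^\gamma\}$ in this regime; iterating this pointwise reduction in both variables produces, up to remainders controlled by lower-order moments, the product dissipation on the left-hand side of \eqref{MomentProduction_n1}.

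The remaining contributions—the $C_2$-linear piece of \eqref{Hboundbetter} and $G_n$—carry strictly less than $\langle v\rangle^{2+\kappa}$ in each variable, so Young's inequality permits absorbing a small $\varepsilon$-fraction of the coercive term and retaining only $C_\varepsilon\int\langle v\rangle^{2+\kappa}\,\rd F_t^n+C'$. This leaves a closed differential inequality
\begin{align*}
\tfrac{\rd}{\rd t}\mathcal{M}_n(t)+c\,\mathcal{D}_n(t)\;\le\;C\,\mathcal{M}_n(t)+C',
\end{align*}
with $\mathcal{M}_n(t)=\int\langle v\rangle^{2+\kappa}\,\rd F_t^n$ and $\mathcal{D}_n$ the integrand under the time integral in \eqref{MomentProduction_n1}. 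A standard Gronwall argument on $[0,T]$ bounds $\mathcal{M}_n$ uniformly in $n$, after which time-integration returns the dissipation bound, with $C_{\kappa,e,T}$ depending only on $T,\kappa,e,\gamma$ and $\int\langle v\rangle^{2+\kappa}\,\rd F_0$.

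The main obstacle is the transfer step in the second paragraph: one must convert $\Phi_n(|v-v_*|)$, which depends only on the relative velocity, into the separated factor $\min\{\langle v\rangle^\gamma,n^\gamma\}\,\min\{\langle v_*\rangle^\gamma,n^\gamma\}$ without losing the sharp $n^\gamma$-truncation. The near-diagonal regime $|v|\approx|v_*|$ is where naive triangle-inequality bounds break down, and it is exactly the concentration of $F_t^n$ guaranteed by the uniform energy bound of Proposition~\ref{cutoff_existence} that rescues the argument; once that is in place, the rest is essentially bookkeeping.
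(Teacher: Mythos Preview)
Your proposal shares the paper's opening: test the weak form of the cutoff equation against $\psi_2$ and invoke Proposition~\ref{Povzner} with the sharper bound \eqref{Hboundbetter}. The difference is in how the estimate closes. You aim for a Gronwall-type inequality $\tfrac{\rd}{\rd t}\mathcal{M}_n+c\,\mathcal{D}_n\le C\,\mathcal{M}_n+C'$ via Young's inequality on the lower-order terms. The paper instead keeps both the coercive and the lower-order contributions in their $\Phi_n$-weighted form, splits at a radius $R_0$, and observes that on $\{|v|\ge R_0\}$ the lower-order weight $\langle v\rangle^{\max\{2,\kappa\}}$ is dominated by $R_0^{-\min\{2,\kappa\}}\langle v\rangle^{2+\kappa}$; choosing $R_0$ large absorbs that portion into the coercive term, while the $\{|v|\le R_0\}$ remainder is bounded outright by a constant. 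This yields the estimate directly, with linear rather than exponential dependence on $T$, and no Gronwall step. Your Young--Gronwall route is equally valid but slightly longer; on the other hand, you are right to flag the factorization of $\Phi_n(|v-v_*|)$ into separated $\min\{\langle v\rangle^\gamma,n^\gamma\}$-type weights as the genuinely delicate point---the paper simply writes down the factored form in \eqref{middlestep} without spelling this out.
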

	
	\begin{proof}
	By considering the cutoff equation \eqref{IBEcut} and integrating over $ v \in \bR^{3} $ with the test function $ \psi(x) = \psi_{2}(x) = (1+x)^{1+\frac{\kappa}{2}} -1 $ as well as the conservation of mass, we observe that
		\begin{multline*}
			 \int_{\bR^{3}} \left\langle v \right\rangle^{2+\kappa} \,\rd F_{t}^{n}(v)- \int_{\bR^{3}} \left\langle v \right\rangle^{2+\kappa} \,\rd F_{0}^{n}(v)\\ 
        = \frac{1}{2} \int_0^t\int_{\bR^{3}} \int_{\bR^{3}} \Phi_{n}(|v-v_{*}|) K_{n}^{e}(v,v_{*}) \,\rd F_{\tau}^{n}(v) \,\rd F_{\tau}^{n}(\vs)\rd \tau,
		\end{multline*}
		where $ K_{n}^{e}(v,v_{*}) $ is defined as in \eqref{Ke}. Now, we use the Povzner inequality (Proposition \ref{Povzner} with \eqref{Hboundbetter}) and obtain
		\begin{equation}\label{middlestep}
			\begin{split}
				&\int_{\bR^{3}} \left\langle v \right\rangle^{2+\kappa} \,\rd F_{t}^{n}(v) + C_1 \int_{0}^{t} \left( \int_{\bR^{3}} \left\langle v \right\rangle^{2+\kappa} \min\{ \left\langle v \right\rangle^{\gamma}, n^{\gamma} \} \,\rd F_{\tau}^{n}(v) \right) \\
    &\qquad \qquad \qquad \qquad \qquad\qquad \qquad \times \left( \int_{\bR^{3}} \left\langle v_* \right\rangle^{2} \min\{ \left\langle v_* \right\rangle^{\gamma}, n^{\gamma} \} \,\rd F_{\tau}^{n}(v_*) \right) \,\rd \tau \\
			&\leq \int_{\bR^{3}} \left\langle v \right\rangle^{2+\kappa} \,\rd F_{0}(v) \\
    &\qquad+ C_{3,4} \left( \int_{\bR^{3}} \left\langle v \right\rangle^{2} \,\rd F_{0}(v) \right) \int_{0}^{t}\int_{\bR^{3}} \left\langle v \right\rangle^{\max\{2,\kappa\}} \min\{ \left\langle v \right\rangle^{\gamma}, n^{\gamma} \} \,\rd F_{\tau}^{n}(v) \,\rd \tau\\
    & + C_2 \int_{0}^{t} \left( \int_{\bR^{3}} \left\langle v \right\rangle^{1+\kappa} \min\{ \left\langle v \right\rangle^{\gamma}, n^{\gamma} \} \,\rd F_{\tau}^{n}(v) \right)  \left( \int_{\bR^{3}} |v_*| \min\{ \left\langle v_* \right\rangle^{\gamma}, n^{\gamma} \} \,\rd F_{\tau}^{n}(v_*) \right) \,\rd \tau \\
    \leq & \int_{\bR^{3}} \left\langle v \right\rangle^{2+\kappa} \,\rd F_{0}(v) \\
    &\qquad+ C_{5} \left( \int_{\bR^{3}} \left\langle v \right\rangle^{2+\kappa} \,\rd F_{0}(v) \right) \int_{0}^{t}\int_{\bR^{3}} \left\langle v \right\rangle^{\max\{2,\kappa\}} \min\{ \left\langle v \right\rangle^{\gamma}, n^{\gamma} \} \,\rd F_{\tau}^{n}(v) \,\rd \tau.
			\end{split}
		\end{equation}
  where the constants $C_1$,$C_2$ are the constants of Proposition \ref{Povzner} and $C_{3,4}$ depends only on the constants $C_3$ and $C_4$ of Proposition \ref{Povzner}. 
	Now, we split the domain $v\in \mathbb{R}^3$ into $|v|\le R_0$ and $|v|\ge R_0$ for a sufficiently large $R_0>0.$ 	By further selecting $ R_{0} > 0 $ large such that 
 $$ C_1\left(1+R_0^2\right)^{\frac{1}{2}\min\{2,\kappa\}} \geq 2C_{5} \int_{\bR^{3}} \left\langle v \right\rangle^{2+\kappa} \,\rd F_{0}(v) ,$$ 
		we obtain
	\begin{equation}\notag
			\begin{split}
				&C_1 \int_{|v| \geq R_{0}} \left\langle v \right\rangle^{2+\kappa} \min\{ \left\langle v \right\rangle^{\gamma}, n^{\gamma} \} \,\rd F_{\tau}^{n}(v) \\
				&\ge 2 C_{5} \left( \int_{\bR^{3}} \left\langle v \right\rangle^{2+\kappa} \,\rd F_{0}(v) \right) \int_{|v|\ge R_0} \left\langle v \right\rangle^{\max\{2,\kappa\}} \min\{ \left\langle v \right\rangle^{\gamma}, n^{\gamma} \} \,\rd F_{\tau}^{n}(v),
			\end{split}
		\end{equation}
		and
		\begin{equation}\notag
			\begin{split}
				& C_{5} \left( \int_{\bR^{3}} \left\langle v \right\rangle^{2+\kappa} \,\rd F_{0}(v) \right) \int_{|v|\le R_0} \left\langle v \right\rangle^{\max\{2,\kappa\}} \min\{ \left\langle v \right\rangle^{\gamma}, n^{\gamma} \} \,\rd F_{\tau}^{n}(v)\lesssim R_0^{2+\kappa+\gamma}\lesssim 1.
			\end{split}
		\end{equation}
		Thus, they yield \eqref{MomentProduction_n1} together with \eqref{middlestep}, and this completes the proof.
	\end{proof}
	This closes the discussions on the existence and the moments-estimates under an angular cutoff. In the next subsection, we introduce the rather standard approximation of a non-cutoff model by cutoff models.
	
    \subsection{Non-cutoff model as an approximation of cutoff model}
    \label{sub:cutoff}
	
    To obtain the existence of the solution to the non-cutoff equation, in this section, we will regard the non-cutoff model as an approximation of cutoff models, of which the well-posedness has already been studied in the last subsection.
	In fact, by directly applying Proposition \ref{cutoff_existence}, we can construct a sequence of approximated solutions  $ \{F^{n}_{t}(v)\} = \{\mathcal{F}^{-1}[\p^{n}(t,\xi)]\} $ where $ \{\p^{n}(t,\xi)\} $ is the sequence of solutions to the cutoff equation \eqref{IBEcut} as before.
	
	In order to pass to the limit for approximated solutions by the compactness argument, we need to have the uniform boundedness and equicontinuity  of the sequence of solutions $\{\varphi^{n}\}$.
	In fact, we note that the uniform-boundedness property $|\varphi^n|\le 1$ and the equicontinuity in the Fourier variable $ \xi $ of the sequence of the solution $ \{\varphi^{n}\} $ are directly inherited from the fact that all of them are found in the space of characteristic function. 	Hence, we only need to prove the equicontinuity in the temporal variable $ t $. 
	
	To this end, we observe that for any $ 0 \leq s < t \leq T $, there exist constants $ C_{e,\psi}, C'_{e,\psi} > 0 $ (independent of $ n $) such that, for any $ \psi \in C_{b}^{2}(\mathbb{R}^{3}) $,
	\begin{equation}
		\begin{split}
			& \left| \int \psi \,\rd F_{t}^{n}(v)-\int \psi \,\rd F_{s}^{n}(v) \right|\\
			&\le \frac{1}{2} \int_{s}^{t}  \int_{\bR^{3}} \int_{\bR^{3}} \bigg| \int_{\Sd^{2}} b_{n}(\hat{v}_- \cdot \sigma)\left(\psi'_{*} + \psi' - \psi_{*} - \psi \right) \,\rd \sigma \bigg| \\
            &\qquad\qquad\qquad\qquad\qquad \times  \left|v-v_{*}\right|^{\gamma} \phi_{n}(|v-v_{*}|) \,\rd F_{\tau}^{n}(v) \,\rd F_{\tau}^{n}(v_{*})  \,\rd \tau\\
			&\le \frac{C_{e, \psi}}{2} \int_{s}^{t}  \int_{\bR^{3}} \int_{\bR^{3}}  |v-v_{*}|^{\gamma+2}\phi_{n}(|v-v_{*}|) \,\rd F_{\tau}^{n}(v) \,\rd F_{\tau}^{n}(v_{*})  \,\rd \tau\\
			&\le C'_{e, \psi} \bigg[ \left|t-s\right| \sup_{s\leq\tau\leq t} \left(\int_{\bR^{3}}  \left\langle v \right\rangle^{2}  \,\rd F_{\tau}^{n}(v) \right) \left( \int_{\bR^{3}} \left\langle v_{*} \right\rangle^{2} \,\rd F_{\tau}^{n}(v_{*}) \right)\\
			& + \int_{s}^{t} \int_{\bR^{3}} \int_{\bR^{3}} \chi_{\{|v|\geq 2|v_{*}|\} \cup \{|v_{*}|\geq 2|v|\} } |v-v_{*}|^{2}\\&\qquad\qquad\qquad\qquad\qquad\qquad\times \min\{|v-v_{*}|^{\gamma}, n^{\gamma}\} \,\rd F_{\tau}^{n}(v) \,\rd F_{\tau}^{n}(v_{*}) \,\rd \tau \bigg] \\
			&\le C'_{e, \psi} \Big[ \left|t-s\right| \left( \int_{\bR^{3}} \left\langle v \right\rangle^{2}  \,\rd F_{0} \right)^{2} + \int_{s}^{t} \left( \int_{\bR^{3}} \left\langle v \right\rangle^{2} \min\{ \left\langle v \right\rangle^{\gamma}, n^{\gamma} \} \,\rd F_{\tau}^{n}(v) \right)^2 \,\rd \tau \Big] \label{equihard},
		\end{split}
	\end{equation}
	where $ \chi_{U} (v) \in C_{c}^{\infty}(\mathbb{R}^{3}) $ is the indicator function 	\begin{equation}\label{chi1}
		\chi_{U} (v) \eqdef \begin{cases}
			1,& \text{if}\ v \in U, \\
			0,& \text{if}\ v \notin U,
		\end{cases}
	\end{equation} $\phi_n$ is defined as in \eqref{Pn}, and we utilized the estimate \cite[Eq. (4.15)]{Qi21_soft},
    $$ \bigg| \int_{\Sd^{2}} b_{n}(\hat{v}_- \cdot \sigma)\left(\psi'_{*} + \psi' - \psi_{*} - \psi \right)\,\rd \sigma \bigg| \leq C_{e,\psi}|v-v_{*}|^{2},$$ 
    in the second inequality, the condition $0<\gamma\le 2$ in the first inequality, and the estimate \cite[pp. 880]{morimoto2016measure} in the third inequality.\\
	For the second term in \eqref{equihard}, we take advantage of \eqref{MomentProduction_n1} in Corollary~\ref{Proposition2+kappa} and obtain that for any $ T > 0 $, there exists a $ C_{\kappa,e,T} > 0$ independent of $ n $ for any given $\kappa>0$ such that
	\begin{equation}\notag
		\int_{s}^{t} \left( \int_{\mathbb{R}^{3}} \chi_{\{ |v| \geq R \}} \left\langle v \right\rangle^{2} \min\{ \left\langle v \right\rangle^{\gamma}, n^{\gamma} \} \,\rd F_{\tau}^{n}(v) \right)^2 \,\rd \tau \lesssim \frac{C_{\kappa,e,T}}{R^{\kappa}},
	\end{equation} 
        for a sufficiently large $R\ge 1$, since we observe that
	\begin{multline*}
	    R^{\kappa}	\int_{s}^{t} \left( \int_{\mathbb{R}^{3}} \chi_{\{ |v| \geq R \}} \left\langle v \right\rangle^{2} \min\{ \left\langle v \right\rangle^{\gamma}, n^{\gamma} \} \,\rd F_{\tau}^{n}(v) \right)^2 \,\rd \tau \\ 
	    \le 	\int_{s}^{t} \left( \int_{\mathbb{R}^{3}} \chi_{\{ |v| \geq R \}} \left\langle v \right\rangle^{2+\kappa } \min\{ \left\langle v \right\rangle^{\gamma}, n^{\gamma} \} \,\rd F_{\tau}^{n}(v) \right)^2 \,\rd \tau 
	    \lesssim C_{\kappa,e,T},	\end{multline*} for $|v|\ge R.$
	On the other hand, since $0<\gamma\le 2,$ we also have
	\begin{multline*}
		\int_{s}^{t} \left( \int_{\mathbb{R}^{3}} \chi_{\{ |v| < R \}} \left\langle v \right\rangle^{2} \min\{ \left\langle v \right\rangle^{\gamma}, n^{\gamma} \} \,\rd F_{\tau}^{n}(v) \right)^2 \,\rd \tau\\
		\lesssim	\int_{s}^{t} \left( \int_{\mathbb{R}^{3}} \chi_{\{ |v| < R \}} R^4\,\rd F_{\tau}^{n}(v) \right)^2 \,\rd \tau 
			\lesssim C_{\kappa,e,T}R^{8} |t-s|.
	\end{multline*} 
        Then, by choosing $\psi=\e^{-i\xi\cdot v}$ we obtain the equicontinuity in the time variable $ t $.

	Thus, thanks to the Arzel\`a-Ascoli Theorem, we are able to take the limit of $ \{\varphi^{n}\} $ (up to a subsequence) in the sense that, on every compact subset of $ [0,\infty) \times \mathbb{R}^{3} $,
	\begin{equation}\label{limp}
		\p(t,\xi) = \lim\limits_{n \rightarrow \infty} \p^{n}(t,\xi),
	\end{equation}
	such that the limit function $ \p(t,\xi) $ is a characteristic function as well.

	To obtain the moments-estimate of the measure-valued solution  $$ F_{t}(v) = \mathcal{F}^{-1}[\p(t,\xi)] $$ in the limit $n\to \infty$, we also introduce the following Lemma~\ref{DissiLemma} to illustrate that some moments properties will be maintained in the limiting process. The proof of the following lemma below in the inelastic case is the same as the counterpart \cite[Lemma 3.2]{morimoto2016measure} of the elastic case, since the estimates are only associated with the variables $v$ and $v_*$ in the limiting process, which has no distinction between the elastic and inelastic cases. We omit the proof.
	\begin{lemma}{{\rm\cite[Lemma 3.2]{morimoto2016measure}}}\label{DissiLemma}
		Assume that $ F^{n}_{t}(v) = \mathcal{F}^{-1}\left[\p^{n}(t,\xi)\right] $ is a sequence of measure-valued solution to cutoff equation and $ F_{t}(v) = \mathcal{F}^{-1}[\p(t,\xi)] $ is the limit of sequence with $ \p(t,\xi) $ obtained in \eqref{limp}. Then,\\
		(i) For every $ t > 0 $, we have,
		\begin{equation}\label{dissFt}
	\int_{\bR^{3}} |v|^{2} \,\rd F_{t}(v) \leq \int_{\bR^{3}} |v|^{2} \,\rd F_{0}(v).
		\end{equation}
		(ii) For any $  T >0 $ and $ \psi (v) \in C(\mathbb{R}^{3}) $ with $ |\psi(v)| \lesssim \left\langle v  \right\rangle^{l}$  for some $ 0 < l < 2 $, we have,
		\begin{equation}\label{limFt1}
			\lim\limits_{n\rightarrow \infty} \int_{\bR^{3}} \psi(v) \,\rd F_{t}^{n}(v) = \int_{\bR^{3}} \psi(v) \,\rd F_{t}(v),
		\end{equation} 
		uniformly for $ t\in [0,T] $.\\
		(iii) For any $  T >0 $ and $ \psi (v,\vs) \in C(\mathbb{R}^{3}\times \mathbb{R}^{3}) $ with $ |\psi(v,\vs) | \lesssim \left\langle v \right\rangle^{l} \left\langle \vs \right\rangle^{l}$ for some $ 0 < l < 2 $, we have,
		\begin{equation}\label{limFt2}
			\lim\limits_{n\rightarrow \infty} \int_{\bR^{3}} \int_{\bR^{3}} \psi (v,\vs) \,\rd F_{t}^{n}(v)\,\rd F_{t}^{n}(\vs) = \int_{\bR^{3}} \int_{\bR^{3}} \psi (v,\vs) \,\rd F_{t}(v) \,\rd F_{t}(\vs),
		\end{equation} 
		uniformly for $ t\in [0,T] $.
	\end{lemma}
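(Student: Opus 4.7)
The plan is to deduce all three parts of Lemma~\ref{DissiLemma} from two ingredients already established in the excerpt: (a) the pointwise (uniform on compacta) convergence $\varphi^n(t,\xi)\to \varphi(t,\xi)$ from \eqref{limp}, and (b) the uniform moment bound \eqref{MomentProduction_n1} together with the dissipation \eqref{dissFt.cut} at the cutoff level. By L\'evy's continuity theorem, ingredient (a) implies that for each fixed $t\in[0,T]$, the probability measures $F_t^n$ converge weakly to $F_t$. Moreover, the equicontinuity estimate derived in \eqref{equihard} (applied with $\psi=\e^{-\im v\cdot\xi}$ or more generally $\psi\in C_b^2$) gives equicontinuity of the maps $t\mapsto \int\psi\,\rd F_t^n$, uniformly in $n$, so that by an Arzel\`a--Ascoli argument the weak convergence $F_t^n\rightharpoonup F_t$ is uniform in $t\in[0,T]$ when tested against any bounded continuous $\psi$.

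For part (i), I would truncate the weight by considering $|v|^2\wedge M$, which is bounded and continuous. Weak convergence then yields
\[
\int_{\bR^3}(|v|^2\wedge M)\,\rd F_t(v) = \lim_{n\to\infty}\int_{\bR^3}(|v|^2\wedge M)\,\rd F_t^n(v)\le \limsup_{n\to\infty}\int_{\bR^3}|v|^2\,\rd F_t^n(v)\le \int_{\bR^3}|v|^2\,\rd F_0(v),
\]
using \eqref{dissFt.cut} for the last inequality. Sending $M\to\infty$ and invoking monotone convergence on the left gives \eqref{dissFt}.

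For part (ii), I would split $\psi=\psi\chi_{\{|v|\le R\}}+\psi\chi_{\{|v|>R\}}$ for $R\ge 1$. Fixing $\kappa>0$ with $l<2<2+\kappa$, the tail is uniformly controlled by
\[
\sup_{n,\ t\in[0,T]}\int_{|v|>R}|\psi(v)|\,\rd F_t^n(v)\;\lesssim\; R^{\,l-(2+\kappa)}\sup_{n,\ t\in[0,T]}\int_{\bR^3}\left\langle v\right\rangle^{2+\kappa}\,\rd F_t^n(v),
\]
which tends to $0$ as $R\to\infty$ by Corollary~\ref{Proposition2+kappa}; the analogous tail bound for the limit $F_t$ follows from lower semicontinuity. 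On the bulk $\{|v|\le R\}$, one smooths the cutoff by a compactly supported continuous function to obtain a bounded continuous approximation of $\psi\chi_{\{|v|\le R\}}$, to which the uniform-in-$t$ weak convergence from the first paragraph applies. Combining the two contributions yields \eqref{limFt1}. Part (iii) follows by the same argument carried out for the product measures: since $F_t^n\rightharpoonup F_t$ uniformly in $t$, the products $F_t^n\otimes F_t^n \rightharpoonup F_t\otimes F_t$ uniformly in $t$, and the two-variable tail $|\psi(v,v_*)|\lesssim \langle v\rangle^l\langle v_*\rangle^l$ with $l<2$ is controlled by the same $(2+\kappa)$-moment bound applied separately in $v$ and $v_*$.

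The only subtle point is securing the \emph{uniformity in} $t$ of the weak convergence; this is exactly what the equicontinuity \eqref{equihard} provides, and without it the conclusion would merely be a pointwise-in-$t$ statement. Since the inelastic nature of the problem enters only through the collisional representation and not through any $v,v_*$-level tightness considerations, once \eqref{MomentProduction_n1} and \eqref{equihard} are in hand the argument proceeds identically to the elastic case treated in \cite[Lemma 3.2]{morimoto2016measure}.
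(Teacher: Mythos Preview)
Your proposal is correct and essentially coincides with the argument the paper defers to: the paper does not give a proof at all but simply states that the proof is identical to \cite[Lemma~3.2]{morimoto2016measure} because only $v,v_*$-level tightness and moment considerations are involved, with no distinction between elastic and inelastic. Your sketch (L\'evy continuity, truncation plus monotone convergence for (i), tail--bulk splitting with uniform moment control for (ii), and the product-measure version for (iii), all upgraded to uniform-in-$t$ via the equicontinuity \eqref{equihard}) is precisely that standard argument. One small remark: for parts (ii) and (iii) you invoke the $(2+\kappa)$-moment bound from Corollary~\ref{Proposition2+kappa}, but since $l<2$ the tails are already controlled by the uniform \emph{second}-moment bound \eqref{dissFt.cut}, which is all the cited reference uses; your version works too but is slightly stronger than necessary.
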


        \begin{remark}
            Note that the energy boundedness $\int_{\mathbb{R}^3}|v|^2 \,\rd F(v)<\infty$ and the energy dissipation property \eqref{dissFt}  exclude the possible non-uniqueness scenario that is caused by the instantaneous energy jump {\rm\cite{Wennberg_nonuniqueness}} in the elastic hard-sphere model with an angular cutoff.
        \end{remark}

	Based on the Lemma~\ref{DissiLemma}, we finally obtain the moments estimate for the limit $ F_{t}(v) $ if initial datum $ F_{0}(v) $ has finite $ (2+\kappa) $-moments.

	\begin{corollary}\label{Proposition2+kappa.noncut}
		Assume that $ F_{0} $ satisfies, for a given $ \kappa > 0 $,
		\begin{equation}\notag
			\int_{\bR^{3}} \left\langle v \right\rangle^{2+\kappa} \,\rd F_{0}(v) < \infty.
		\end{equation}
		Let $ F_{t}(v) $ satisfy Lemma~\ref{DissiLemma}. Then, for any fixed $ T > 0 $, there exists constants $C_1$ and $ C_{\kappa,e,T} > 0 $ independent of $ n $ such that
		\begin{multline}\label{MomentProduction_1.noncut}
		    \sup_{t \in [0,T]} \int_{\bR^{3}} \left\langle v \right\rangle^{2+\kappa} \,\rd F_{t}(v)\\ + C_1\int_{0}^{T} \left( \int_{\bR^{3}} \left\langle v \right\rangle^{2+\kappa+\gamma}  \,\rd F_{\tau}^{n}(v) \right) \left( \int_{\bR^{3}} \left\langle v_* \right\rangle^{2+\gamma}  \,\rd F_{\tau}^{n}(v_*) \right) \,\rd \tau  \leq C_{\kappa,e,T}.
		\end{multline}
	\end{corollary}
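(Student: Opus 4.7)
The plan is to deduce Corollary~\ref{Proposition2+kappa.noncut} by passing to the limit $n\to\infty$ in the uniform cutoff estimate \eqref{MomentProduction_n1} of Corollary~\ref{Proposition2+kappa}, exploiting the convergence properties provided by Lemma~\ref{DissiLemma}. The principal obstacle is that Lemma~\ref{DissiLemma}(ii)--(iii) only yields passage to the limit against test functions of growth strictly less than $\langle v\rangle^2$, whereas the target weights $\langle v\rangle^{2+\kappa}$ and $\langle v\rangle^{2+\kappa+\gamma}$ grow far beyond that threshold. I would bridge this gap by combining a smooth truncation with a monotone-convergence step.

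For the supremum term, I would fix a smooth cutoff $\chi_R\in C_c^\infty(\mathbb{R}^3)$ with $0\le \chi_R\le 1$, $\chi_R\equiv 1$ on $\{|v|\le R\}$, and $\mathrm{supp}\,\chi_R\subset\{|v|\le 2R\}$. Then $\psi_R(v):=\langle v\rangle^{2+\kappa}\chi_R(v)$ is bounded and continuous, so Lemma~\ref{DissiLemma}(ii) gives, uniformly in $t\in[0,T]$,
$$\int_{\mathbb{R}^3}\psi_R(v)\,\rd F_t(v)=\lim_{n\to\infty}\int_{\mathbb{R}^3}\psi_R(v)\,\rd F_t^n(v)\le \sup_{n\in\mathbb{N}}\int_{\mathbb{R}^3}\langle v\rangle^{2+\kappa}\,\rd F_t^n(v)\le C_{\kappa,e,T},$$
where the last inequality comes from the first term of \eqref{MomentProduction_n1}. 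Sending $R\to\infty$ and invoking monotone convergence then produce the first part of \eqref{MomentProduction_1.noncut}, namely $\sup_{t\in[0,T]}\int\langle v\rangle^{2+\kappa}\,\rd F_t(v)\le C_{\kappa,e,T}$.

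For the time-integral term, I would couple the same truncation with Lemma~\ref{DissiLemma}(iii). For each fixed $R$ and every $n$ with $n^\gamma\ge\langle 2R\rangle^\gamma$, the identity $\chi_R(v)\langle v\rangle^\gamma=\chi_R(v)\min\{\langle v\rangle^\gamma,n^\gamma\}$ (and its analogue in $v_*$) allows the truncated product
$$\int_{\mathbb{R}^3}\chi_R(v)\langle v\rangle^{2+\kappa+\gamma}\,\rd F_\tau(v)\cdot\int_{\mathbb{R}^3}\chi_R(v_*)\langle v_*\rangle^{2+\gamma}\,\rd F_\tau(v_*)$$
to be identified, via Lemma~\ref{DissiLemma}(iii) applied to the bounded continuous product test function on $\mathbb{R}^3\times\mathbb{R}^3$, as the $n\to\infty$ limit of the pre-limit expression
$$\int_{\mathbb{R}^3}\chi_R\langle v\rangle^{2+\kappa}\min\{\langle v\rangle^\gamma,n^\gamma\}\,\rd F_\tau^n(v)\cdot\int_{\mathbb{R}^3}\chi_R\langle v_*\rangle^{2}\min\{\langle v_*\rangle^\gamma,n^\gamma\}\,\rd F_\tau^n(v_*).$$
Since dropping the factor $\chi_R$ in the pre-limit side only enlarges the integrand, taking $\liminf$ in $n$, then applying Fatou's lemma in $\tau\in[0,T]$ to the uniform bound \eqref{MomentProduction_n1}, and finally letting $R\to\infty$ by monotone convergence, transfers the $n$-uniform cutoff estimate to the desired bound for the limit measure $F_\tau$ with the untruncated weight.

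The delicate point is the strict ordering of the two limits: $n\to\infty$ must be performed first, with $R$ held fixed and $n$ large enough that $\min\{\cdot,n^\gamma\}$ no longer alters the integrand on the support of $\chi_R$, so that Lemma~\ref{DissiLemma} is applied only to continuous bounded test functions; the limit $R\to\infty$ is taken afterwards via monotone convergence. Beyond this bookkeeping, the argument is essentially routine.
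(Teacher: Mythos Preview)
Your proposal is correct and follows the paper's approach exactly: pass to the limit $n\to\infty$ in \eqref{MomentProduction_n1} via Lemma~\ref{DissiLemma}, which the paper records in a single sentence. Your truncation-and-monotone-convergence argument supplies the details the paper omits (in particular how to apply Lemma~\ref{DissiLemma} to weights growing faster than $\langle v\rangle^2$), and your reading of the second term in \eqref{MomentProduction_1.noncut} as involving the limit measure $F_\tau$ rather than $F_\tau^n$ is the intended one.
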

	
	\begin{proof}We obtain \eqref{MomentProduction_1.noncut} by taking the limit of \eqref{MomentProduction_n1} as $n\to \infty$ by means of Lemma~\ref{DissiLemma}. This completes the proof.
	\end{proof}

	Now we are ready to pass to the limit $n \to \infty$ and discuss the non-cutoff case. In the next subsection, prove the main theorem (Theorem \ref{main1}) in the non-cutoff situation.

	\subsection{Existence theory for the non-cutoff model }
	\label{sub:pro_main1}
	
	In this subsection, we will complete the main existence theorem by showing that the limit function $ F_{t}(v) $ is indeed the measure-valued solution to the original non-cutoff equation \eqref{IBE}.  
	
	\begin{proof}[Proof of Theorem \ref{main1}]
		Let $ \Psi^{e}_{n}(v,v_{*})$ and $ \Psi^{e}(v,v_{*}) $ denote 
		\begin{equation*}
			\Psi^{e}_{n}(v,v_{*}) = |v-v_{*}|^{\gamma} \phi_{n}(|v-v_{*}|) \int_{\Sd^{2}} b_{n}(\hat{v}_- \cdot \sigma) \left(\psi'_{*} + \psi' - \psi_{*} - \psi \right) \,\rd \sigma,
		\end{equation*}
		and 
		\begin{equation*}
			\Psi^{e}(v,v_{*}) = |v-v_{*}|^{\gamma} \int_{\Sd^{2}} b(\hat{v}_- \cdot \sigma) \left(\psi'_{*} + \psi' - \psi_{*} - \psi \right) \,\rd \sigma.
		\end{equation*}
  where $\psi \in C_b^2(\mathbb{R}^3) $ is the test function in the definition of the measure-valued solution.\\
		Then, to establish the limit $n\to \infty$ in the measure-valued sense, it suffices to calculate the following difference:
		\begin{multline}\label{symmweak}
				 \int_{0}^{t}\int_{\bR^{3}} \int_{\bR^{3}} \int_{\Sd^{2}}  b_{n}(\hat{v}_- \cdot \sigma) \left| v-v_{*}\right|^{\gamma} \phi_{n}(|v-v_{*}|)\\
            \times  \left(\psi'_{*} + \psi' - \psi_{*} - \psi \right) \,\rd \sigma \,\rd F^{n}_{\tau} (v) \,\rd F^{n}_{\tau} (v_{*}) \,\rd \tau \\
				  \qquad\qquad - \int_{0}^{t}\int_{\bR^{3}} \int_{\bR^{3}} \int_{\Sd^{2}} b(\hat{v}_- \cdot \sigma) \left| v-v_{*}\right|^{\gamma}  \left(\psi'_{*} + \psi' - \psi_{*} - \psi \right) \,\rd \sigma \,\rd F_{\tau} (v) \,\rd F_{\tau} (v_{*}) \,\rd \tau\\[3pt]
				= \int_{0}^{t}\int_{\bR^{3}} \int_{\bR^{3}} \Psi^{e}_{n}(v,v_{*})  \,\rd F^{n}_{\tau}(v) \,\rd F^{n}_{\tau}(v_{*}) \,\rd \tau - \int_{0}^{t}\int_{\bR^{3}} \int_{\bR^{3}} \Psi^{e}(v,v_{*})  \,\rd F_{\tau}(v) \,\rd F_{\tau}(v_{*}) \,\rd \tau\\[3pt]
				\eqdef   I_{1} - I_{2} + I_{3} + I_{4}, \qquad \qquad \qquad \qquad \qquad \qquad \qquad \qquad \qquad \qquad \qquad \qquad \qquad \qquad 
			\end{multline}
		where the integrals $ I_{j}$ with $j =1,2,3,4 $ are defined as follows: for some $R>0$, we define
		\begin{equation*}
			\begin{split}
				I_{1}&\eqdef \int_{0}^{t}\int_{\bR^{3}}\int_{\bR^{3}} \chi_{\{|v|> R\} \cup \{|v_{*}|> R\}} \Psi^{e}_{n}(v,v_{*})  \,\rd F^{n}_{\tau}(v) \,\rd F^{n}_{\tau}(v_{*}) \,\rd \tau,\\
				I_{2}&\eqdef \int_{0}^{t}\int_{\bR^{3}}\int_{\bR^{3}} \chi_{\{|v|> R\} \cup \{|v_{*}|> R\}} \Psi^{e}(v,v_{*})  \,\rd F_{\tau}(v) \,\rd F_{\tau}(v_{*}) \,\rd \tau,\\
				I_{3}&\eqdef \int_{0}^{t} \int_{\bR^{3}}\int_{\bR^{3}} \chi_{\{|v| \leq R\} \cap \{|v_{*}|\leq R\}} \left[ \Psi^{e}_{n}(v,v_{*}) - \Psi^{e}(v,v_{*}) \right]  \,\rd F^{n}_{\tau}(v) \,\rd F^{n}_{\tau}(v_{*}) \,\rd \tau,\\
				I_{4}&\eqdef \int_{0}^{t} \Big[ \int_{\bR^{3}} \chi_{\{|v| \leq R\} \cap \{|v_{*}|\leq R\}} \Psi^{e}(v,v_{*}) \,\rd F^{n}_{\tau}(v) \,\rd F^{n}_{\tau}(v_{*})\\
				& \qquad \qquad \qquad  \qquad - \int_{\bR^{3}} \chi_{\{|v| \leq R\} \cap \{|v_{*}|\leq R\}} \Psi^{e}(v,v_{*}) \,\rd F_{\tau}(v) \,\rd F_{\tau}(v_{*}) \Big] \,\rd \tau.
			\end{split}	
		\end{equation*} 
		Note that
		\begin{multline*}
				|\Psi^{e}_{n}(v,v_{*})| \lesssim  \left\langle v \right\rangle^{2}  \left\langle v_{*} \right\rangle^{2} \chi_{\{ |v|<2|v_{*}|<4|v| \}} \\
				+ \left\langle v \right\rangle^{2} \min\{ \left\langle v \right\rangle^{\gamma}, n^{\gamma}\} \chi_{\{ |v|\geq 2|v_{*}| \}}
				+ \left\langle v_{*} \right\rangle^{2} \min\{ \left\langle v_{*} \right\rangle^{\gamma}, n^{\gamma}\} \chi_{\{ |v_{*}|\geq 2|v| \}},
		\end{multline*}
		and
		\begin{multline*}
				|\Psi^{e}(v,v_{*})| \lesssim |v-v_{*}|^{\gamma+2} \lesssim \left\langle v \right\rangle^{2}  \left\langle v_{*} \right\rangle^{2} \chi_{\{ |v|<2|v_{*}|<4|v| \}} \\
				+ \left\langle v \right\rangle^{\gamma + 2} \chi_{\{ |v|\geq 2|v_{*}| \}} + \left\langle v_{*} \right\rangle^{\gamma + 2} \chi_{\{ |v_{*}|\geq 2|v| \}}.
		\end{multline*}
		By applying the moments-estimates \eqref{MomentProduction_n1} and \eqref{MomentProduction_1.noncut}, we first obtain that
		\begin{equation}\notag
			\begin{split}
				I_{1} &\le \left|  \int_{0}^{t}\int_{\bR^{3}}\int_{\bR^{3}} \chi_{\{|v|> R\} \cup \{|v_{*}|> R\}} \Psi^{e}_{n}(v,v_{*})  \,\rd F^{n}_{\tau}(v) \,\rd F^{n}_{\tau}(v_{*}) \,\rd \tau \right|\\
				\lesssim & T R^{-2\kappa} \left( \int_{\bR^{3}} \left\langle v \right\rangle^{2+\kappa}  \,\rd F_{0} \right)^{2} + R^{-\kappa}\int_{0}^{T} \left( \int_{\bR^{3}} \left\langle v \right\rangle^{2+\kappa} \min\{ \left\langle v \right\rangle^{\gamma}, n^{\gamma} \} \,\rd F_{\tau}^{n}(v) \right) \,\rd \tau\\
				\lesssim & R^{-\kappa}.
			\end{split}
		\end{equation}
	By the similar estimate, we also have
		\begin{equation}\notag
			I_{2} \leq \left|  \int_{0}^{t}\int_{\bR^{3}}\int_{\bR^{3}} \chi_{\{|v|> R\} \cup \{|v_{*}|> R\}} \Psi^{e}(v,v_{*})  \,\rd F_{\tau}(v) \,\rd F_{\tau}(v_{*}) \,\rd \tau \right| \lesssim R^{-\kappa}.
		\end{equation}
		On the other hand, the third term $ I_{3} $ of \eqref{symmweak} converges to zero uniformly for $ t \geq 0 $, as $ n\rightarrow \infty $, since the function $ \Psi^{e}_{n}(v,v_{*}) $ converges to $ \Psi^{e}(v,v_{*}) $ uniformly on a compact set of $ (v,v_{*}) \in  \mathbb{R}_{v}^{3} \times \mathbb{R}_{v_*}^{3} $. Moreover, by Lemma \ref{DissiLemma} (iii), the fourth term $ I_{4} $ of \eqref{symmweak} also converges to zero uniformly on a compact set of $ t \in [0,\infty) $, as $ n \rightarrow \infty $. Then by taking $R\to \infty,$ we have $I_1-I_2+I_3+I_4 \to 0$ and we obtain that $F_t$ is indeed a measure-valued solution.  
		This completes the proof of Theorem \ref{main1}.
	\end{proof}
	
	\begin{remark}As we observed above, the proof of our main theorem {\rm(Theorem \ref{main1})} relies on the fact that $\kappa >0$ and this cannot be relaxed to $\kappa=0$. Note that since we assume the initial datum $F_0$ has finite $(2+\kappa)$-moment, the case $\kappa=0$ corresponds to  the energy bound $F_0\in P_2(\mathbb{R}^3).$  In order to relax our assumption to the sharp energy bound, we believe that one must establish a more improved Povzner estimate than our refined Povzner estimate {\rm(Proposition \ref{Povzner})}. 	\end{remark}
	
	This completes the proof of the global existence of a measure-valued solution in the case without angular cutoff. In the next section, we prove that the solution produces additional moments, which are also bounded.
	
	\section{Moment-creation property}
	\label{sec:moment}
	In this section, we will present the proof of our main theorem (Theorem~\ref{main2}) on the moments production of the corresponding measure-valued solution in Theorem \ref{main1}.
	
	\begin{proof}[Proof of Theorem \ref{main2} ]
		We first introduce the outline of the proof. We will basically apply the inductive strategy to prove the moments-creation property of the measure-valued solution. During the induction process, we will prove that for each iteration, the solution gains an additional $ \gamma $-order moments in an arbitrarily small time interval such that any higher order moments will instantly become finite after the time evolution. 
		
		Motivated by \cite[pp. 479]{MW1999}, to achieve the moments-estimate in each time interval, we will first try to construct the convex approximation $\psi_{\kappa,m}(x)$ of the weight function $ \psi_2(x) = (1+x)^{1+\frac{\kappa}{2}} - 1$ with $ \kappa > 0 $. However, in our case, we remark that the collision process is inelastic and it conserves the total mass and momentum but not the energy as in \eqref{conservQe} and \eqref{disspationQe}. Therefore, we have to design a different type of approximating functions $p_{\kappa,m}$ defined as follows:
			\begin{equation}\label{Psi1}
			\psi_{\kappa,m}(x) \eqdef \begin{cases}
				\psi_2(x) ,& \text{if}\ x \leq m, \\
				p_{\kappa,m}(x),& \text{if}\ x> m,
			\end{cases}
		\end{equation}
		where
		\begin{equation}\label{pp}
			p_{\kappa,m}(x) = C_{\kappa}(m) x + \psi_2(m) -  C_{\kappa}(m) m
		\end{equation}
		for $ \kappa > 0 $ and all $ m \in \mathbb{N} $ where $C_\kappa(x)$ is defined as the derivative of $\psi_2(x)$. 
    Then, if we further define 
		\begin{multline}\label{pkm}
			\psi^{\kappa}_{m}(|v|^{2}) \eqdef \psi_{\kappa,m}(|v|^{2}) - p_{\kappa,m}(|v|^{2}) \\=\begin{cases}
				\psi_2(|v|^{2}) - \psi_2(m) - C_{\kappa}(m) \left[|v|^{2}-m\right] ,& \text{if}\ |v|^2 \leq m, \\
				0,& \text{if}\ |v|^2 > m,
			\end{cases}
		\end{multline}
		where
		$\psi^\kappa_m$ is a bounded and continuously differentiable approximation of $\psi_\kappa$ with compact support, where $ \psi^{\kappa}_{m}(v) \in C^{1}_{0}(\mathbb{R}^{3}) $, and $\partial \psi^{\kappa}_{m}(v) \in C_{0}(\mathbb{R}^{3}) $ is Lipschitz continuous with a uniform Lipschitz constant. 
				
		Then, we observe that,
		\begin{multline}\label{p1}
		    	\int_{\bR^{3}} \psi_{\kappa,m}(|v|^{2}) \,\rd F_{t}(v) - \int_{\bR^{3}} \psi_{\kappa,m}(|v|^{2}) \,\rd F_{0}(v)\\
		= \int_{\bR^{3}} \psi_{m}^{\kappa}(|v|^{2}) \,\rd F_{t}(v) - \int_{\bR^{3}} \psi_{m}^{\kappa}(|v|^{2}) \,\rd F_{0}(v) \\+ \int_{\bR^{3}} p_{\kappa,m}(|v|^{2}) \,\rd F_{t}(v) - \int_{\bR^{3}} p_{\kappa,m}(|v|^{2}) \,\rd F_{0}(v),
		\end{multline} 
            where, by noting \eqref{pp},  the last two terms in the equation above can be further deduced as follows
            \begin{multline}
		   \int_{\bR^{3}} p_{\kappa,m}(|v|^{2}) \,\rd F_{t}(v) - \int_{\bR^{3}} p_{\kappa,m}(|v|^{2}) \,\rd F_{0}(v)\\
      = C_{\kappa}(m)\int_{\bR^{3}} |v|^{2} \,\rd F_{t}(v) - C_{\kappa}(m)\int_{\bR^{3}} |v|^{2} \,\rd F_{0}(v)  \\
      + \left[ \int_{\bR^{3}} \left(\psi_2(m) -  C_{\kappa}(m) m \right) \,\rd F_{t}(v) - \int_{\bR^{3}} \left( \psi_2(m) -  C_{\kappa}(m) m\right) \,\rd F_{0}(v) \right] \\
      = C_{\kappa}(m)\int_{\bR^{3}} |v|^{2} \,\rd F_{t}(v) - C_{\kappa}(m)\int_{\bR^{3}} |v|^{2} \,\rd F_{0}(v)
		\end{multline} 
        where the vanishment of the last terms in the second equality above is due to the conservation of mass. Hence, \eqref{p1} becomes
		\begin{multline*}
			\int_{\bR^{3}} \psi_{\kappa,m}(|v|^{2}) \,\rd F_{t}(v) + C_{k}(m)\int_{\bR^{3}} |v|^{2} \,\rd F_{0}(v) \\= \int_{\bR^{3}} \psi_{m}^{\kappa}(|v|^{2}) \,\rd F_{t}(v) - \int_{\bR^{3}} \psi_{m}^{\kappa}(|v|^{2}) \,\rd F_{0}(v) +C_{k}(m)\int_{\bR^{3}} |v|^{2} \,\rd F_{t}(v).
		\end{multline*}
		After substituting $\psi=\psi^\kappa_m$ by approximating $C^2_b$ functions into the weak formulation \eqref{weak}, we find, for $ t \geq 0 $,
		\begin{equation}\label{M1}
			\begin{split}
				&\int_{\bR^{3}} \psi^\kappa_m(|v|^{2}) \,\rd F_{t}(v) - \int_{\bR^{3}} \psi^\kappa_m(|v|^{2}) \,\rd F_{0}(v)\\
				&= \frac{1}{2} \int_{0}^{t}\,\rd\tau \,\int_{\bR^{3}}\rd F_{\tau}(v_{*}) \,\int_{\bR^{3}}\rd F_{\tau}(v) \, |v-v_{*}|^{\gamma} \\
				& \qquad \qquad \quad \qquad \times \int_{\Sd^{2}} b(\hat{v}_-\cdot\sigma)  \left[ \psi^\kappa_m(|v'|^{2}) + \psi^\kappa_m(|v'_{*}|^{2}) - \psi^\kappa_m(|v|^{2}) - \psi^\kappa_m(|v_{*}|^{2}) \right] \,\rd \sigma.
			\end{split}
		\end{equation}
    By noting $\psi_m^{\kappa} = \psi_{\kappa,m} - p_{\kappa,m}$, we further have
    \begin{equation}\label{M2}
    \begin{split}
        &\int_{\bR^{3}} \psi^\kappa_m(|v|^{2}) \,\rd F_{t}(v) - \int_{\bR^{3}} \psi^\kappa_m(|v|^{2}) \,\rd F_{0}(v)\\
        =&  \frac{1}{2} \int_{0}^{t}\,\rd\tau \,\int_{\bR^{3}}\rd F_{\tau}(v_{*}) \,\int_{\bR^{3}}\rd F_{\tau}(v) \, |v-v_{*}|^{\gamma} \\
				&\qquad \qquad \times\int_{\Sd^{2}} b(\hat{v}_-\cdot\sigma)  \left[ \psi_{\kappa,m}(|v'|^{2}) + \psi_{\kappa,m}(|v'_{*}|^{2}) - \psi_{\kappa,m}(|v|^{2}) - \psi_{\kappa,m}(|v_{*}|^{2}) \right] \,\rd \sigma\\
				&\qquad - \frac{1}{2} \int_{0}^{t}\,\rd\tau \,\int_{\bR^{3}}\rd F_{\tau}(v_{*}) \,\int_{\bR^{3}}\rd F_{\tau}(v) \, |v-v_{*}|^{\gamma} \\
				&\qquad \qquad \times\int_{\Sd^{2}} b(\hat{v}_-\cdot\sigma)  \left[ p_{\kappa,m}(|v'|^{2}) + p_{\kappa,m}(|v'_{*}|^{2}) - p_{\kappa,m}(|v|^{2}) - p_{\kappa,m}(|v_{*}|^{2}) \right] \,\rd \sigma.
    \end{split}
    \end{equation}
		For the last term in \eqref{M2} above, we use \eqref{pp} and the collision invariance of $ 1 $ and obtain
		\begin{equation}\label{p2}
		\begin{split}
			&\frac{1}{2} \int_{0}^{t}\,\rd\tau \,\int_{\bR^{3}}\rd F_{\tau}(v_{*}) \,\int_{\bR^{3}}\rd F_{\tau}(v) \, |v-v_{*}|^{\gamma} \\
			&\qquad \qquad \times\int_{\Sd^{2}} b(\hat{v}_-\cdot\sigma)  \left[ p_{\kappa,m}(|v'|^{2}) + p_{\kappa,m}(|v'_{*}|^{2}) - p_{\kappa,m}(|v|^{2}) - p_{\kappa,m}(|v_{*}|^{2}) \right] \,\rd \sigma \\
			&= \frac{1}{2}C_{\kappa}(m)  \int_{0}^{t}\,\rd\tau \,\int_{\bR^{3}}\rd F_{\tau}(v_{*}) \,\int_{\bR^{3}}\rd F_{\tau}(v) \, |v-v_{*}|^{\gamma} \\
			&\qquad \qquad \qquad \qquad\qquad \qquad\qquad \qquad\times\int_{\Sd^{2}} b(\hat{v}_-\cdot\sigma)  \left[ |v'|^{2} + |v'_{*}|^{2} - |v|^{2} - |v_{*}|^{2} \right] \,\rd \sigma\\
   &=-\frac{1}{2}C_{\kappa}(m)  \int_{0}^{t}\,\rd\tau \,\int_{\bR^{3}}\rd F_{\tau}(v_{*}) \,\int_{\bR^{3}}\rd F_{\tau}(v) \, |v-v_{*}|^{\gamma} \\
			&\qquad \qquad \qquad \qquad\qquad \qquad\qquad \qquad\times\int_{\Sd^{2}} b(\hat{v}_-\cdot\sigma)\frac{1-e^2}{2}\frac{1-(\hat{v}_- \cdot \sigma)}{2}|v-v_*|^2\,\rd \sigma,
		\end{split}
		\end{equation} by \eqref{energyloss}. 
		Then the right-hand side of \eqref{p2} can also be well-estimated by the Povzner inequality in Proposition~\ref{Povzner}.
	
		By combing \eqref{p1}-\eqref{p2}, we obtain,
		\begin{equation}\notag
			\begin{split}
					&\int_{\bR^{3}} \psi_{\kappa,m}(|v|^{2}) \,\rd F_{t}(v) + C_{k}(m)\int_{\bR^{3}} |v|^{2} \,\rd F_{0}(v) \\
					&= \frac{1}{2} \int_{0}^{t} \int_{\bR^{3}} \int_{\bR^{3}} |v-v_{*}|^{\gamma}  \Big[ H^{\kappa}_{m}(v,v_{*}) + G^{\kappa}_{m}(v,v_{*}) \Big] \rd F_{\tau}(v) \rd F_{\tau}(v_{*}) \,\rd\tau\\
					& \qquad -\frac{C_{\kappa}(m)}{2} \int_{0}^{t}\,\rd\tau \,\int_{\bR^{3}}\rd F_{\tau}(v_{*}) \,\int_{\bR^{3}}\rd F_{\tau}(v) \, |v-v_{*}|^{\gamma} \\
					&\qquad \qquad \times\int_{\Sd^{2}} b(\hat{v}_-\cdot\sigma)\frac{1-e^2}{2}\frac{1-(\hat{v}_- \cdot \sigma)}{2}|v-v_*|^2\,\rd \sigma\\
					&\qquad  + C_{k}(m)\int_{\bR^{3}} |v|^{2} \,\rd F_{t}(v),
			\end{split}
		\end{equation}
		where $ H^{\kappa}_{m}(v,v_{*}), G^{\kappa}_{m}(v,v_{*}) $ are defined in the same way as in Proposition \ref{Povzner} except for now replacing $ \psi_2 $ by $ \psi_{\kappa,m} $ and changing $ b_{n} $ into $ b $.

  Then, by applying the Povzner estimate in Proposition \ref{Povzner} onto $H^\kappa_m$ and $G^\kappa_m$ and by further letting $ m \rightarrow \infty $ and noticing $ C_{\kappa}(m) $ will approach to $ 1 $, we find that, for suitable $ C_1, C_2, C_3, C_e > 0 $ and arbitrarily small $ t_{0} > 0 $,
		\begin{multline}\label{t0}
				\int_{\bR^{3}} \left\langle v \right\rangle^{2+\kappa} \,\rd F_{t_{0}}(v) +\int_{\bR^{3}} | v |^{2} \,\rd F_{0}(v)\\
    + C_1\int_{0}^{t_0} \left( \int_{\bR^{3}} \left\langle v \right\rangle^{2+\kappa+\gamma}  \,\rd F_{\tau}^{n}(v) \right) \left( \int_{\bR^{3}} \left\langle v_* \right\rangle^{2+\gamma}  \,\rd F_{\tau}^{n}(v_*) \right) \,\rd \tau \\
				\leq    \int_{\bR^{3}} |v|^{2} \,\rd F_{t_{0}}(v) + C_3 \int_{0}^{t_{0}}\int_{\bR^{3}} \left\langle v \right\rangle^{2+\gamma}  \,\rd F_{\tau}(v) \,\rd \tau \\+C_e\int_{0}^{t_0}\,\rd\tau \left( \int_{\bR^{3}} \left\langle v \right\rangle^{2+\gamma}  \,\rd F_{\tau}^{n}(v) \right) \left( \int_{\bR^{3}} \left\langle v_* \right\rangle^{2+\gamma}  \,\rd F_{\tau}^{n}(v_*) \right)\\
        + C_2\int_{0}^{t_0} \left( \int_{\bR^{3}} \left\langle v \right\rangle^{1+\kappa+\gamma}  \,\rd F_{\tau}^{n}(v) \right) \left( \int_{\bR^{3}} \left\langle v_* \right\rangle^{1+\gamma}  \,\rd F_{\tau}^{n}(v_*) \right) \,\rd \tau,
		\end{multline}
		where the last three terms on the right-hand side are finite by  Corollary~\ref{Proposition2+kappa.noncut}.
		As the second term of the left-hand side \eqref{t0} is finite, it then implies that, there exists $ t_{1} \in (0, t_{0}) $ such that,
		\begin{equation}\label{t1}
			\int_{\bR^{3}} \left\langle v \right\rangle^{2+\kappa_{1}} \,\rd F_{t_{1}} (v) < \infty,
		\end{equation}
		where we denote $ \kappa_{1} = \kappa + \gamma $.
		By repeating the same procedures as above and replacing $ \kappa $ and $ t=0 $ by $ \kappa_{1} $ and $ t_{1} $ respectively, we have, for suitable $ C'_1,C'_2,C'_3, C'_e> 0 $ and $ t \in (t_{1},t_{0}) $,
		\begin{multline}\notag 
				\int_{\bR^{3}} \left\langle v \right\rangle^{2+\kappa_1} \,\rd F_{t_{0}}(v) +\int_{\bR^{3}} | v |^{2+\gamma} \,\rd F_{t_1}(v)\\
    + C'_1\int_{t_1}^{t_0} \left( \int_{\bR^{3}} \left\langle v \right\rangle^{2+\kappa_1+\gamma}  \,\rd F_{\tau}^{n}(v) \right) \left( \int_{\bR^{3}} \left\langle v_* \right\rangle^{2+2\gamma}  \,\rd F_{\tau}^{n}(v_*) \right) \,\rd \tau \\
				\leq    \int_{\bR^{3}} |v|^{2+\gamma} \,\rd F_{t_{0}}(v) + C'_3 \int_{t_1}^{t_{0}}\int_{\bR^{3}} \left\langle v \right\rangle^{2+2\gamma}  \,\rd F_{\tau}(v) \,\rd \tau \\+C'_e\int_{t_1}^{t_0}\,\rd\tau \left( \int_{\bR^{3}} \left\langle v \right\rangle^{2+2\gamma}  \,\rd F_{\tau}^{n}(v) \right) \left( \int_{\bR^{3}} \left\langle v_* \right\rangle^{2+2\gamma}  \,\rd F_{\tau}^{n}(v_*) \right)\\
        + C'_2\int_{t_1}^{t_0} \left( \int_{\bR^{3}} \left\langle v \right\rangle^{1+\kappa_1+\gamma}  \,\rd F_{\tau}^{n}(v) \right) \left( \int_{\bR^{3}} \left\langle v_* \right\rangle^{1+2\gamma}  \,\rd F_{\tau}^{n}(v_*) \right) \,\rd \tau.
		\end{multline}
	Here the right-hand side is also finite by noticing \eqref{t1}. 
	Applying the induction by setting $ \kappa_{j+1} = \kappa_{j} + \gamma = \gamma(j+1) + \kappa $, we obtain that, for any $ T > t_{0} $ and $ l > 0 $, there exists a constant $ C_{T,l} $ such that,
		\begin{equation*}
			\begin{split}
				\sup_{t_{0} \leq t \leq T} \int_{\bR^{3}} \left\langle v \right\rangle^{l} \,\rd F_{t}(v) \leq C_{T,l}.
			\end{split}
		\end{equation*}
		Then we follow the same continuation argument as in  \cite[Lemma 3.8]{LuMouhot2012} or \cite[Proposition 1.4]{morimoto2016measure}  to extend to the infinite time interval $t\ge t_0$ and conclude that $$ \sup_{t \geq t_{0}} M_{l}(t) < \infty. $$ This completes the proof of Theorem \ref{main2}.
	\end{proof}
	
	
	
	\section*{Acknowledgement}
	\label{sec:ack}
	J.~W.~Jang is supported by the National Research Foundation of Korea (NRF) grant funded by the Korean government (MSIT) NRF-2022R1G1A1009044 and by the Basic Science Research Institute Fund of Korea NRF-2021R1A6A1A10042944. 
	K.~Qi is supported by grants from the School of Mathematics at the University of Minnesota, and would like to thank Prof.~Tong Yang for his introducing and discussing the related topic. 
	
	\appendix
        \section{Priliminary Lemma for Convex Functions}
        For the functions $\psi$ satisfying the following list of conditions:
        \begin{equation}\label{condition_psi}
        \begin{split}
            \psi(x) \geq&\  0, \quad x > 0; \  \psi(0) = 0 \\
            \psi(x) \in C^{1}([0,\infty)) &\ \text{is convex}, \quad \psi''(x) \ \text{is locally bounded};\\
            \psi'(ax) \leq&\ \eta_1(a) \psi'(x), \ x >0, \  a > 1;\\
            \psi''(ax) \leq&\ \eta_2(a) \psi''(x), \ x > 0, \  a > 1
        \end{split}
        \end{equation}
        where $\eta_1(a)$ and $\eta_2(a)$ are functions of $a$ only, bounded on every finite interval $a>0$.

        \begin{lemma}{\cite[Lemma 3.1]{GambaDiffusively}}
            Assume that $\psi(x)$ satisfies conditions \eqref{condition_psi}. Then, 
            \begin{equation}\label{A2}
                \psi(x+y) - \psi(x) - \psi(y) \leq \tilde{A} (x\psi'(y) + y \psi'(x))
            \end{equation}
            and
            \begin{equation}\label{A3}
                \psi(x+y) - \psi(x) - \psi(y) \geq \tilde{b} xy \psi''(x+y)
            \end{equation}
            where $\tilde{A} = \eta_1(2)$ and $\tilde{b} = (2\eta_2(2))^{-1}$.
        \end{lemma}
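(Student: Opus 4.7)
The two inequalities are symmetric in $x$ and $y$, so by symmetry I may assume $x \geq y \geq 0$; the case $y=0$ reduces to $\psi(x)-\psi(x)-\psi(0)=0$ which is trivially compatible with both bounds since $\psi(0)=0$ and $\psi'\geq 0$ (by convexity and $\psi(0)=0$, together with $\psi(x)\geq 0$). Both inequalities will be obtained by representing the left-hand side as an integral of $\psi'$ or $\psi''$ and then invoking the growth conditions on $\psi'$ and $\psi''$ in \eqref{condition_psi}.

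For the upper bound \eqref{A2}, the plan is to write
\[
\psi(x+y)-\psi(x)=\int_{0}^{y}\psi'(x+s)\,\rd s.
\]
Since $\psi'$ is non-decreasing (as $\psi''\geq 0$) and $s\leq y\leq x$, we have $\psi'(x+s)\leq \psi'(2x)\leq \eta_{1}(2)\psi'(x)$ by the third hypothesis of \eqref{condition_psi}. Using $\psi(y)\geq 0$, this gives
\[
\psi(x+y)-\psi(x)-\psi(y)\leq \eta_{1}(2)\,y\,\psi'(x).
\]
Since $x\psi'(y)\geq 0$, this is bounded above by $\eta_{1}(2)(x\psi'(y)+y\psi'(x))$, which is \eqref{A2} with $\tilde A=\eta_{1}(2)$.

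For the lower bound \eqref{A3}, the key representation is
\[
\psi(x+y)-\psi(x)-\psi(y)=\int_{0}^{y}\!\!\int_{s}^{x+s}\psi''(u)\,\rd u\,\rd s,
\]
obtained by subtracting $\psi(y)-\psi(0)=\int_{0}^{y}\psi'(s)\,\rd s$ from $\psi(x+y)-\psi(x)=\int_{0}^{y}\psi'(x+s)\,\rd s$ and applying the fundamental theorem of calculus again in $u$. Now I restrict the integration region to $u\geq (x+y)/2$: for such $u$ the inequality $x+y\leq 2u$ combined with the fourth hypothesis in \eqref{condition_psi} gives $\psi''(x+y)\leq \eta_{2}(2)\psi''(u)$, so $\psi''(u)\geq \psi''(x+y)/\eta_{2}(2)$. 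Since $x\geq y$ and $s\in[0,y]$, the interval $[(x+y)/2,x+s]$ is a valid sub-interval of $[s,x+s]$ (because $s\leq y\leq (x+y)/2 \leq x+s$). Thus
\[
\psi(x+y)-\psi(x)-\psi(y)\geq \frac{\psi''(x+y)}{\eta_{2}(2)}\int_{0}^{y}\!\!\left(x+s-\tfrac{x+y}{2}\right)\rd s.
\]
A direct computation gives $\int_{0}^{y}(s+(x-y)/2)\,\rd s=xy/2$, so the right-hand side equals $\frac{xy\,\psi''(x+y)}{2\eta_{2}(2)}=\tilde b\, xy\,\psi''(x+y)$, proving \eqref{A3}.

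The main technical point (and the only step requiring care) is the selection of the sub-region $\{u\geq (x+y)/2\}$ in the lower bound: one needs simultaneously that the $\eta_{2}(2)$-growth condition applies (which forces $u\geq (x+y)/2$) and that the sub-region still captures enough of the rectangle of integration to yield the sharp $xy$ factor. The WLOG assumption $x\geq y$ is what makes these two requirements compatible, and after that the computation is routine.
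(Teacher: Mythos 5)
The paper does not actually prove this lemma; it states it in the appendix and cites \cite[Lemma 3.1]{GambaDiffusively}, so there is no in-paper argument to compare against, and your proposal must be judged on its own.

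Your derivation of \eqref{A2} is correct: with $x\geq y$ you write $\psi(x+y)-\psi(x)=\int_0^y\psi'(x+s)\,\rd s$, use $x+s\leq 2x$, $\psi'$ non-decreasing, and the third hypothesis with $a=2$ to get $\psi'(x+s)\leq\eta_1(2)\psi'(x)$, then drop the non-negative terms $\psi(y)$ and $x\psi'(y)$. That is fine. The first-order integral representation, the second-order representation $\int_0^y\int_s^{x+s}\psi''(u)\,\rd u\,\rd s$, and the computation $\int_0^y\bigl(s+\tfrac{x-y}{2}\bigr)\,\rd s=\tfrac{xy}{2}$ are all correct as well.

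The gap is the pointwise bound $\psi''(u)\geq\psi''(x+y)/\eta_2(2)$ on the sub-region $u\geq(x+y)/2$. From the fourth hypothesis with $a=(x+y)/u\in(1,2]$ you get $\psi''(x+y)=\psi''(au)\leq\eta_2(a)\psi''(u)$, hence $\psi''(u)\geq\psi''(x+y)/\eta_2\bigl((x+y)/u\bigr)$; to pass to $\eta_2(2)$ you need $\eta_2(a)\leq\eta_2(2)$ for all $a\in(1,2]$, i.e.\ $\eta_2$ non-decreasing, and even at $u=x+y$ (where $a=1$) you need $\eta_2(2)\geq 1$. Neither monotonicity of $\eta_2$ nor $\eta_2(2)\geq 1$ is part of \eqref{condition_psi}, which only asks $\eta_2$ to be bounded on finite intervals. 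Concretely, for $\psi(x)=x^{p}$ with $1<p<2$ (this is $\psi_1$ with $0<\kappa<2$), the optimal $\eta_2$ is $\eta_2(a)=a^{p-2}$, which is \emph{decreasing} with $\eta_2(2)=2^{p-2}<1$, and then $\psi''(u)\geq\psi''(x+y)/\eta_2(2)$ actually fails at $u=x+y$. The alternate reading of your sentence (``$x+y\leq 2u$ and $\psi''(x+y)\leq\psi''(2u)\leq\eta_2(2)\psi''(u)$'') needs $\psi''$ non-decreasing, which is also not assumed. So as written the step is not justified. To repair it you must either (i) assume $\eta_2$ non-decreasing, which is a harmless normalization but then $\eta_2(2)$ in the conclusion must be read as $\sup_{1<a\leq2}\eta_2(a)$, or (ii) split into cases: when $\psi''$ is non-increasing, $\psi''(u)\geq\psi''(x+y)$ for all $u\leq x+y$ without any $\eta_2$ argument (which then gives $\tilde b=1/2$, possibly weaker than the claimed $1/(2\eta_2(2))$ when $\eta_2(2)<1$); when $\psi''$ is non-decreasing, your restriction to $u\geq(x+y)/2$ works because $\eta_2$ can then be taken non-decreasing. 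Either fix changes the constant, so the discrepancy should be flagged explicitly.
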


	\bibliographystyle{amsplain}
	\bibliography{Bib_Kunlun_inelastic}
	
\end{document}